\documentclass[12pt, leqno]{amsart}
\usepackage{indentfirst}
\usepackage{amssymb,amsmath}
\usepackage{amstext}
\usepackage{amsthm}
\usepackage{amsopn}
\usepackage{amsfonts}
\usepackage{amsmath}
\usepackage{latexsym}
\usepackage{amscd}
\usepackage{amssymb}
\usepackage{amsmath}
\usepackage[all,cmtip]{xy}
\usepackage{leftidx}
\usepackage{graphicx}
\usepackage{tikz}
\usepackage{ulem}

\textheight     =8in
\topmargin      =-.1in          

\textwidth      =6in \oddsidemargin  =.25in \evensidemargin
=\oddsidemargin \font\teneufm=eufm10 \font\seveneufm=eufm7
\font\fiveeufm=eufm5
\newfam\eufmfam
\textfont\eufmfam=\teneufm \scriptfont\eufmfam=\seveneufm
\scriptscriptfont\eufmfam=\fiveeufm
\def\frak#1{{\fam\eufmfam\relax#1}}
\let\goth\mathfrak
\def\cR{\mathcal R}

\def\gg{\goth g}

\def\GG{\frak G}

\def\gg{\goth g}
\def\gh{\goth h}
\def\gn{\goth n}
\def\gt{\goth t}
\def\gG { \goth {G}}

\def\gH{\goth H}

\def\gX{\text{\it X}}
\def\gT{\goth T}
\def\gB{\goth B}

\def\gb{\goth b}

\def\gg{\goth g}

\def\gz{\goth z}

\def\ga{\goth a}

\def\1{\mbox{\bf 1}}

%

 \DeclareMathOperator{\Hom}{Hom}
\DeclareMathOperator{\Aut}{Aut} 
 \DeclareMathOperator{\Der}{Der}
\DeclareMathOperator{\im}{im} 

 \DeclareMathOperator{\Id}{Id}

\newtheorem{theorem}{Theorem}[section]

\newtheorem{corollary}[theorem]{Corollary}

\newtheorem{lemma}[theorem]{Lemma}

\newtheorem{proposition}[theorem]{Proposition}



\theoremstyle{definition}
\newtheorem{remark}[theorem]{Remark}

\newtheorem{definition}[theorem]{Definition}

\numberwithin{equation}{section}

\def\Z{\Bbb Z}
\def\C{\Bbb C}

\def\tG{\widetilde{\mathbf G}}

\def\tE{\widetilde{E}}
\def\hE{\widehat{E}}

\def\C{\mathbb C}

\def\bB{\text{\rm \bf B}}

\def\bF{\text{\rm \bf F}}
\def\bJ{\text{\rm \bf J}}

\def\bG{\text{\rm \bf G}}
\def\bN{\text{\rm \bf N}}

\def\bW{\text{\rm \bf W}}
\def\Mor{\text{\rm Mor}}

\def\bZ{\text{\rm \bf Z}}

\def\pr{\prime}

\def\cL{{\mathcal L}}

\def\fet{\text{\it f\'et}}
\def\ad{\text{\rm ad}}

\def\bTor{\text{\rm \bf Tor}}

\def\bT{\text{\rm \bf T}}

\def\bS{\text{\rm \bf S}}

\def\bK{\text{\rm \bf K}}

\def\ts{{\tilde{\sigma}}}

\def\bmu{{\pmb\mu }}

\def\us{\underset}
\def\os{\overset}
\def\ol{\overline}
\def\id{\text{\rm id}}

\def\q{\quad}

\def\2int{\mathop{2\int}\nolimits}

\def\dim{\mathop{\rm dim}\nolimits}

\def\Spec{\mathop{\rm Spec}\nolimits}

\def\Hom{\mathop{\rm Hom}\nolimits}
\def\Der{\mathop{\rm Der}\nolimits}

\def\Gal{\mathop{\rm Gal}\nolimits}

\def\Pic{\mathop{\rm Pic}\nolimits}

\def\Aut{\text{\rm{Aut}}}

\def\bAut{\text{\bf{Aut}}}

\def\bOut{\text{\bf{Out}}}

\def\resp.{\mathop{\rm resp.}\nolimits}
\def\limproj{\mathop{\oalign{lim\cr
\hidewidth$\longleftarrow$\hidewidth\cr}}}
\def\limind{\mathop{\oalign{lim\cr
\hidewidth$\longrightarrow$\hidewidth\cr}}}

\def\lgr{\longrightarrow}

\font\math=cmmi10
\def\varpi{\hbox{\math\char'44}}
\def\simlgr{\buildrel\sim\over\lgr}

\def\pa{\S\kern.15em }

\def\un{\uppercase\expandafter{\romannumeral 1}}
\def\deux{\uppercase\expandafter{\romannumeral 2}}
\def\trois{\uppercase\expandafter{\romannumeral 3}}
\def\quatre{\uppercase\expandafter{\romannumeral 4}}
\def\cinq{\uppercase\expandafter{\romannumeral 5}}
\def\six{\uppercase\expandafter{\romannumeral 6}}

\def\gg{\goth g}

\def\hrf{\hbox to .2in{\hrulefill}}
\def\hfl#1#2#3{\smash{\mathop{\hbox to#3{\rightarrowfill}}\limits
^{\scriptstyle#1}_{\scriptstyle#2}}}
\def\gfl#1#2#3{\smash{\mathop{\hbox to#3{\leftarrowfill}}\limits
^{\scriptstyle#1}_{\scriptstyle#2}}}

\begin{document}

\title{Loop torsors. Theory and Applications}

\author{Vladimir Chernousov} 
\address{Department of Mathematics, University of Alberta,
    Edmonton, Alberta T6G 2G1, Canada}
\thanks{ V. Chernousov was partially supported by an NSERC research grant} 
\email{chernous@math.ualberta.ca}
\author[P. Gille]{Philippe Gille}
\thanks{P. Gille was supported by the project "Group schemes, root systems, and
related representations" founded by the European Union - NextGenerationEU through
Romania's National Recovery and Resilience Plan (PNRR) call no. PNRR-III-C9-2023-
I8, Project CF159/31.07.2023, and coordinated by the Ministry of Research, Innovation and Digitalization (MCID)
of Romania. }
\address{UMR 5208 Institut Camille Jordan - Universit\'e Claude Bernard Lyon 1
43 boulevard du 11 novembre 1918
69622 Villeurbanne cedex - France}  
\email{gille@math.univ-lyon1.fr}
\address{and Institute of Mathematics "Simion Stoilow" of the Romanian Academy,
21 Calea Grivitei Street, 010702 Bucharest, Romania}
\author{A. Pianzola}
\address{Department of Mathematics, University of Alberta,
    Edmonton, Alberta T6G 2G1, Canada.
    \newline
 \indent Centro de Altos Estudios en Ciencia Exactas, Avenida de Mayo 866, (1084) Buenos Aires, Argentina.}
\thanks{A. Pianzola wishes to thank NSERC and CONICET for their
continuous support}\email{a.pianzola@math.ualberta.ca}
\date{}
 \maketitle


 \maketitle 
 \begin{abstract} Loop torsors over Laurent polynomial rings in characteristic $0$ were originally introduced in relation to infinite dimensional Lie theory. Applications to other areas require a theory that can yields results in positive characteristic, and for group schemes that are not of finite type. The relation between loop and so-called toral torsors, is one of the central questions in the area. The present paper addresses  this question in full generality.
 \noindent  
 
\noindent {\em Keywords:} Loop torsor, Locally algebraic group, Toral torsors, Semilinear descent data \\

\noindent {\em MSC 2000} 17B67, 11E72, 14L30, 14E20.
\end{abstract}

\vskip14mm
\section{Introduction} Kac-Moody Lie algebras, the affine ones in particular, are defined by generators and relations. This begs the question of how do these algebras ``look like".\footnote{\,One encounters this question already in the finite dimensional Lie theory. That the split Lie algebra of type ${\rm G}_2$ exists is shown by considering generators and relations \`a la Serre. But this Lie algebra has a very concrete realization: The Lie algebra of derivations of the octonions.} The concept of loop torsor traces its roots to this question, that is, the characterization of affine Kac-Moody Lie algebras over the complex numbers. More precisely, the following classes of Lie algebras ``are the same":

(i) A Lie algebra $\cL$ obtained as the derived Lie algebra of an  affine Kac-Moody Lie algebra modulo its centre.\footnote{ \, The full algebra is obtained from $\cL$ by passing to  the universal central extension $\cL \oplus \Bbb{C}c$, and adding a canonical one dimensional space of ``degree derivations" that allows, among other things, for the existence of an invariant non-degenerate bilinear form.} 

(ii) A loop algebra $L(\gg, \sigma)$ where $\gg$ is a finite dimensional simple Lie algebra over $\C$ and $\sigma$ an automorphism of the Coxeter-Dynkin diagram of $\gg$ (viewed as an automorphism of $\gg.$)

(iii) A Lie algebra $L$ over the Laurent polynomial ring $R = \C[t^{\pm 1}]$ that locally  for the fppf-topology (in fact,  for the F\'et-topology) looks like $\gg \otimes_\C R$  for some (necessarily unique) $\gg$ as in (ii).

\smallskip
The loop algebras were independently introduced by Kac \cite{Kac}  and Moody (unpublished) as realization of the derived algebra of the affine Lie algebras with their centres factored out. From the definition it is not difficult to see that the infinite dimensional complex Lie algebra $L(\gg, \sigma)$ is in fact an $R-$Lie algebra, free of rank $= \dim(\gg)$, such that under the base change to $S = \C[t^{^{\pm \frac{1}{n}}} ]$ it becomes $\gg \otimes_\C S \simeq (\gg \otimes_\C R) \otimes_R S.$ It is thus a twisted form of $\gg \otimes_\C R$, hence corresponds to an $\bAut(\gg)-$torsor over $R.$ What (iii) says is that the torsors arising from loop algebras are in fact up to isomorphism {\it all}  $\bAut(\gg)-$torsors over $R$ (see \cite{P1} and \cite{P2}).

 The $\bAut(\gg)-$torsors corresponding to the loop algebras are very special. First of all, they are isotrivial, that is, they are trivialized by a finite \'etale extension of the base. They thus come from cocycles involving the algebraic fundamental group of $\Spec(R).$ The values of these cocycles are also special: They take values on $\bAut(\gg)(\C).$ In \cite{GP0} we introduced  the general concept of a loop torsor $E$ over a scheme $X$ under the action of an $X-$group scheme $\bG$ defined over a base field $k.$ We looked in detail at how the concept of $E$ being loop relates to the existence of maximal tori on the twisted $X-$group ${_E}\bG,$ in which case we say that $E$ is a toral torsor.

Loop torsors were used in \cite{CGP2} to provide a classification of  torsors over the ring $k[t_1^{\pm 1}, \cdots, t_n^{\pm 1}]$ ($k$ of characteristic $0$) under the action of a reductive group scheme. They were already singled out in \cite{PZ} and in the work of Stavrova \cite{St1}, \cite{St2}. Loop torsors seem to be central to the current work of R. Parimala and one of the authors, see \cite{GiPa}. Their framework, however,  requires that some of the key material of \cite{GP0}, such as Borel-Mostow semilinear considerations and the relation between loop and toral torsors,  be done over fields of arbitrary characteristic and for group schemes that are not of finite type. This is the purpose of the present work. We work with group schemes that include the group of automorphisms of reductive groups. They are extensions of twisted constant groups by reductive groups. The main results are that, with natural assumptions, toral torsor are loop (Theorem \ref{t->l}), and loop torsors are toral (Theorem \ref{existenceoftori}). The crucial application to the case in which the base is a Laurent polynomial ring is given in Theorem \ref{thm_laurent}.

Finally, it is our believe that the techniques introduced to establish this much broader base for the theory of loop torsors are of independent interest.

\section{Generalities on the algebraic fundamental group}\label{AFG}

Throughout this section $X$ will denote a scheme, and $\gG$ a group scheme over $X.$
We will do a quick review of the topics and concepts that are relevant for the present work. Accordingly, we assume that 
  $X$ is connected and  noetherian.  In particular $X$
  is quasi-compact and quasi-separated \cite[Tags 01OV,01OY]{SP}. 
Let $K$ be a field. Following \cite{LF}, by a {\it quasi geometric} point of $X$ we will understand a morphism $x : \Spec(K) \to X$ where $K$ is a separably closed field.

\medskip
\medskip

Let $X_{\fet}$ be the category of finite \'etale covers of $X,$ and
$F$ the covariant functor from $X_{\fet}$ to the category of finite sets
that assigns to a finite \'etale cover $X'$ of $X$ the set
$$
F(X^\pr) =\{\text{\rm quasi geometric points of}\; X^\pr(K)\;\text{\rm
above}\; x\}.
$$
That is, $F(X')$ consists of all morphisms $x^\pr : \;\Spec\,(K
) \to X^\pr$ for which the diagram
$$
\begin{matrix}
{} &{} &X^\pr\\
&\os {x^\pr}\nearrow &\downarrow\\
\Spec\, (K ) &\us x\rightarrow &X
\end{matrix}
$$
commutes. The group of automorphisms  of the functor $F$ is called
the {\it algebraic fundamental group of} $X$ {\it at} $x,$ and  is
denoted by $\pi_1(\gX,x).$\footnote{\, In Grothendieck's definition of $\pi_1(X,x),$ $x$ would be a geometric point. Just as in Galois theory, where separable closures instead of algebraic closures are  used, for the treatment of algebraic fundamental groups it is more natural/convenient to use quasi geometric base points.}
 If $X = \Spec(R)$ is affine, then $x$ corresponds to a ring homomorphism $R \to K,$  and we will denote the fundamental group by $\pi_1(R, x).$The functor $F$ is pro-representable: There exists a directed set
$I,$ objects $(X_i)_{i\in I} $ of $X_{\fet},$ surjective morphisms
$\varphi_{ij} \in \,\Hom_X(X_j,X_i)$ for $i\le j$ and geometric
points $x_i\in F(X_i)$ such that
$$
  x_i =\varphi  _{ij} \circ
x_j
$$
\begin{equation} \label{FG2}
 \text{\it The canonical map}\;  f:\limind\; \Hom_X(X_i, X^\pr)\to F(X^\pr) \,\, \text{\it is bijective,}
\end{equation}
where the map $f$ of (\ref{FG2}) is as follows:  Given
$\varphi :X_i\to X^\pr$ then $f(\varphi  ) = \varphi(x_i).$
 The elements of
$\limind\; \Hom_X(X_i,X^\pr)$ appearing in (\ref{FG2}) are by
definition the morphisms  in the category of pro-objects over $X$
(see \cite[\S8.13]{EGAIV} for details). It is in this sense that
$\limind\;\Hom(X_i,-)$ pro-represents $F.$

Since the $X_i$ are finite and \'etale over $X$ the morphisms
$\varphi_{ij}$ are affine. Thus the inverse limit
$$
X^{sc} = \limproj\;X_i
$$
exists in the category of schemes over $X$   \cite[\S8.2]{EGAIV}.  For any
finite \'etale scheme $X^\pr$ over $X$ we thus have a canonical map
$$
\Hom_{Pro-X}(X^{sc},X^\pr)  \buildrel {\rm def} \over = \limind\;\Hom_X(X_i,X^\pr) \simeq F(X') \to \;\Hom_X(X^{sc},X^\pr)
$$
obtained by considering the canonical morphisms $\varphi_i: X^{\rm
sc}\to X_i.$ Because our $X$ is assumed to be noetherian, this last map is bijective. More precisely.

\begin{proposition}\label{representability}  $F$ is represented
by $X^{sc};$ that is, there exists a
 bijection
$$
F(X^\pr) \simeq \Hom_X(X^{sc},X^\pr)
$$
which is functorial on the objects $X^\pr$ of $X_{\fet}.$
\end{proposition}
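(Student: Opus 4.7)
The plan is to observe that the proposition is, in essence, a rephrasing of the pro-representability statement (\ref{FG2}) together with the fact that $\Hom$ into a finitely presented target commutes with filtered inverse limits of schemes with affine transition maps. More precisely, (\ref{FG2}) already supplies a functorial bijection
$$
\limind_{i \in I} \, \Hom_X(X_i, X') \simeq F(X'),
$$
so it suffices to identify the left-hand side with $\Hom_X(X^{sc}, X')$.

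To do so, I would verify the hypotheses needed to invoke the limit formalism of \cite[IV.\S8]{EGAIV}. The scheme $X$ is connected and noetherian, hence quasi-compact and quasi-separated; the finite \'etale covers $X_i$ are in particular affine over $X$, so the transition morphisms $\varphi_{ij} \in \Hom_X(X_j, X_i)$ are affine. It follows from \cite[IV.8.2]{EGAIV} that the inverse limit $X^{sc} = \limproj X_i$ exists in the category of schemes over $X$. The target $X'$ is finite \'etale over the noetherian base $X$, hence of finite presentation over $X$. Under exactly these hypotheses, \cite[IV.8.14.2]{EGAIV} gives the canonical bijection
$$
\limind_{i \in I} \, \Hom_X(X_i, X') \; \simlgr \; \Hom_X(X^{sc}, X').
$$

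Composing with (\ref{FG2}) yields the desired bijection $F(X') \simeq \Hom_X(X^{sc}, X')$. Functoriality in $X' \in X_{\fet}$ is automatic: a morphism $X' \to X''$ of finite \'etale covers induces compatible post-composition maps $\Hom_X(X_i, X') \to \Hom_X(X_i, X'')$ at each finite stage, hence compatible maps on the filtered colimit and on the Hom from $X^{sc}$; likewise on $F$ by the definition of quasi geometric points lying above $x$.

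The only real bookkeeping is the verification that the EGA hypotheses apply cleanly in our setting, which rests on the noetherian assumption on $X$ (ensuring quasi-compactness, quasi-separatedness, and finite presentation of finite \'etale covers) together with affineness of the transition maps. No serious obstacle arises; the content of the proposition lies in combining pro-representability (\ref{FG2}) with the compatibility of $\Hom$ with cofiltered limits along affine maps.
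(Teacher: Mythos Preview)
Your proposal is correct and follows essentially the same approach as the paper: both combine the pro-representability statement (\ref{FG2}) with the EGA\,IV result that $\Hom$ into a locally finitely presented target commutes with a filtered inverse limit along affine transition maps. The only minor difference is emphasis---the paper singles out that each $X_i$ (not just $X$) is noetherian, hence quasi-compact and quasi-separated, which is the precise hypothesis needed for \cite[prop.\ 8.13.1]{EGAIV}; you might make this explicit.
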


\begin{proof} Because the $X_i$ are affine over $X$ and $X$ is noetherian,
each $X_i$ is noetherian, in particular, quasicompact and
quasi-separated. 
 Thus, for $X^\pr/X$ locally of finite presentation, in particular for $X'$ in $X_{\fet}$,  the map
 $$ \limind\;\Hom_X(X_i,X^\pr) \to \Hom_X(\limproj X_i,X^\pr)$$
 is bijective \cite[prop 8.13.1]{EGAIV}. The
Proposition now follows from (\ref{FG2}).
\end{proof}


In computing $X^{ sc} = \limproj X_i$ we may replace $(X_i)_{i\in
I}$ by any cofinal family.  This allows us to assume that the $X_i$
are (connected) Galois, i.e. the $X_i$ are connected and the (left)
action of $\text{\rm Aut}_X(X_i)$ on $F(X_i)$ is transitive. We then
have
$$
F(X_i) \simeq \;\Hom_{Pro-X}(X^{sc},X_i) \simeq
\;\Hom_X(X_i,X_i) = \text{\rm Aut}_X(X_i).$$

Thus $\pi_1(X,x)$ can be identified with the group
$ \limproj \,\text{\rm Aut} _X(X_i)^{opp}.$  Each $\text{\rm
Aut}_X(X_i)$ is finite, and this endows $\pi_1(X,x)$ with the
structure of a profinite topological group.

The group $\pi_1(X,x)$ acts on the right on $X^{sc}$ as the
inverse limit of the finite groups $\text{\rm Aut}_X(X_i).$
Thus, the group $\pi_1(X,x)$ acts on the left on each set $F(X^\pr)
=\;\Hom_{Pro-X}(X^{sc},X^\pr)$ for all $X^\pr\in X_{\fet}.$ This
action is continuous since the structure morphism $X^\pr\to X$
``factors  at the finite level", that is,   there exists a morphism $X_i\to
X^\pr$ of $X$--schemes for some $i \in I.$  If $u: X^\pr \to  X^{\pr\pr}$ is a morphism of
$X_{\fet},$ then the map $F(u) :F(X^\pr) \to F(X^{\pr\pr})$ clearly
commutes with the action of $\pi_1(X,x).$ This construction provides
an equivalence between $X_{\fet}$ and the category of finite sets equipped with a continuous
$\pi_1(X,x)$--action.

The right action of $
\pi_1(\gX,x)$ on $X^{sc}$ induces an action of $\pi_1(X,x)$ on
$\gG(X^{sc}) = \;\Mor_X(X^{sc},\gG),$ namely
$$
^\gamma  f(z) = f(z^\gamma  )  \q \forall \gamma \in \pi_1(X,x), \q f\in
\gG(X^{sc}), \q z\in X^{sc}.
$$
\begin{remark}\label{scprop} Since  $X$ is connected, quasicompact and quasi-separated, the scheme $X^{sc}$ is connected and simply connected. See \cite[prop.\ 3.4]{VW}.
\end{remark}

\begin{proposition}\label{discrete} Assume that 
 $\gG$ is locally of finite
presentation over $X.$ Then ${\gG}(X^{sc})$ is a discrete
$\pi_1(X,x)$--group and the canonical map
$$
\us\longrightarrow{\text{\rm lim}}\,H^1\big(\text{\rm
Aut}_X(X_i),\gG(X_i)\big) \to H^1\big(\pi_1(X,x),\gG(X^{sc})\big)
$$
is bijective.
\end{proposition}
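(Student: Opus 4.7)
The plan is to proceed in three stages.

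First I would identify $\gG(X^{sc})$ with the filtered colimit $\varinjlim_i \gG(X_i)$. Each $X_i$ is finite over the noetherian scheme $X$, hence noetherian and qcqs; the transition morphisms are affine, so $X^{sc}=\varprojlim X_i$ exists as a scheme. Since $\gG/X$ is locally of finite presentation, \cite[Prop.\ 8.13.1]{EGAIV} (already used in the proof of Proposition \ref{representability}) yields
$$\gG(X^{sc}) \;=\; \varinjlim_{i\in I} \gG(X_i),$$
and this bijection is $\pi_1(X,x)$-equivariant by the very definition of the action on $X^{sc}$.

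The discreteness assertion is then immediate. If $f\in\gG(X^{sc})$ is pulled back from $f_i\in\gG(X_i)$ along $\varphi_i:X^{sc}\to X_i$, then for any $\gamma\in\pi_1(X,x)$ one has ${}^\gamma f = f_i\circ\varphi_i\circ\gamma$, which equals $f$ whenever $\gamma$ lies in the open normal subgroup $U_i:=\ker\bigl(\pi_1(X,x)\twoheadrightarrow \Aut_X(X_i)^{opp}\bigr)$. Hence every stabilizer is open, which is precisely the statement that $\gG(X^{sc})$ is a discrete $\pi_1(X,x)$-group.

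For the cohomological statement I would invoke the standard description of the continuous cohomology of a profinite group with discrete coefficients (see Serre, \emph{Cohomologie Galoisienne}, I.2, Prop.\ 8):
$$H^1\bigl(\pi_1(X,x),\,\gG(X^{sc})\bigr) \;=\; \varinjlim_{i} H^1\bigl(\pi_1(X,x)/U_i,\;\gG(X^{sc})^{U_i}\bigr).$$
Using $\pi_1(X,x)/U_i = \Aut_X(X_i)^{opp}$ together with the canonical identification $H^1(G,-)\cong H^1(G^{opp},-)$ (via $g\mapsto g^{-1}$), everything reduces to the identification $\gG(X^{sc})^{U_i}=\gG(X_i)$. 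The inclusion $\supseteq$ was recorded in the previous paragraph. For the reverse inclusion, a $U_i$-invariant element $f$ is pulled back from some $f_j\in\gG(X_j)$ with $j\ge i$; the relation $U_i/U_j=\ker(\Aut_X(X_j)\to\Aut_X(X_i))$ shows that $f_j$ is invariant under the Galois group of the connected Galois étale cover $X_j\to X_i$, and since $\gG$ is a representable $X$-scheme, effective descent along this cover produces a unique element of $\gG(X_i)$ with the correct image.

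The main obstacle is this last descent step: one must carefully match the $\pi_1(X,x)$-action on $\gG(X^{sc})$ (coming from the right action of $\pi_1(X,x)$ on $X^{sc}$) with the classical Galois action on $\gG(X_j)$ used in descent theory, and then cite effective descent for the representable functor $\gG$ along finite étale covers. Once this compatibility is in place, what remains is purely formal profinite-cohomology bookkeeping.
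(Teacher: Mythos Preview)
Your proof is correct and follows essentially the same route as the paper's. The paper invokes the same identification $\gG(X^{sc})=\varinjlim_i\gG(X_i)$ from \cite[Prop.~8.13.1]{EGAIV} for discreteness and the same Serre colimit formula for the $H^1$; it simply asserts the final line $H^1\bigl(\pi_1(X,x),\gG(X^{sc})\bigr)=\varinjlim_i H^1\bigl(\Aut_X(X_i),\gG(X_i)\bigr)$ without spelling out the identification $\gG(X^{sc})^{U_i}=\gG(X_i)$ that you justify via Galois descent along $X_j\to X_i$.
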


\begin{remark}\label{continuous} Here and elsewhere when a profinite group $A$ acts discretely on a module $M$ the corresponding cohomology $ H^1(A,M) $ is the {\it continuous} cohomology as defined in \cite{Se}. Similarly, if a group $H$ acts in both $A$ and $M,$ then $\Hom_H(A,M)$ stands for the continuous group homomorphism of $A$ into $M$ that commute with the action of $H.$
\end{remark}

\begin{proof}
To show that $\gG(X^{sc})$ is discrete amounts to showing that
the stabilizer in $\pi_1(X,x)$ of a point of $f\in \gG(X^{sc})$
is open. But if $\gG$ is locally of finite presentation then
$\gG(X^{sc}) = \gG(\limproj\,X_i) = \limind\, \gG(X_i)$
(\cite[prop. 8.13.1]{EGAIV} ), so we may assume that $f\in \gG(X_i)$
for some $i.$ The result is then clear, for the stabilizer we are
after is the inverse image
 under the continuous map $\pi_1(X,x) \to \text{\rm
Aut}_X(X_i)$  of the stabilizer of $f$ in $\text{\rm Aut}_X(X_i)$
(which is then open  since
  $\text{\rm Aut}_X(X_i)$ is given the
discrete topology).

By definition
$$
H^1\big(\pi_1(X,x),\gG(X^{sc})\big) =\limind
\big(\pi_1(X,x)/U, \gG(X^{sc})^U\big)
$$
where the limit is taken over all open normal subgroups $U$ of
$\pi_1(X,x).$ Since $I$ is directed, for each such $U$ we can find $U_i\subset U$ so
that $ U_i = \;\text{\rm ker}\big(\pi_1(X,x) \to \,\text{\rm
Aut}_X(X_i)\big).$ Thus
$$
H^1\big(\pi_1(X,x),\gG(X^{sc})\big) = \limind\;H^1\big(\text{\rm Aut}_X(X_i),\gG(X_i)\big)
$$
as desired.
\end{proof}

We assume now that $X$  is a $k$-scheme over a field $k$ such that $X(k) \neq \emptyset$
 and such that $X$ is geometrically connected.
We fix an algebraic closure $\bar{k}$ of $k,$ and let $k_s$ the separable closure of $k$ in $\overline{k}.$ 
\medskip

Fix an element $a_0 \in X(k)$ and denote by $a : \Spec (k_s) \longrightarrow \Spec(k) \os{a_0} \longrightarrow X$  the corresponding
quasi geometric point  of $X.$ 
The point $a$ will be our choice of base point in the sequel.

\begin{remark}\label{ks} (a) The scheme $X^{sc}$ has a natural $k_s-$scheme structure, as we now outline. Let $\ell$ be a finite algebraic extension of $k,$ and denote $X \times_k \, \ell$ by $X_\ell.$ There exists a unique quasi geometric point $a_\ell : \Spec(\ol{k}) \to X_\ell$ such that $a_\ell \mapsto a$ (resp. $a_\ell \mapsto \Spec(\ell)$ ) under the projection $X_\ell \to X$ (resp. $X_\ell \to \Spec(\ell)$ ). Assume now $\ell$ is Galois. Since $X$ is geometrically connected, $X_\ell$ is a (connected) Galois extension of $X$ with Galois group canonically isomorphic to that of $\ell/k.$ 
 
The set $I_\ell = \{i \in I :  \Hom_X(X_i, X_\ell) \neq \emptyset\}$ is not empty. If $i \in I_\ell$ and $f_i \in \Hom_X(X_i, X_\ell),$ then $f_i$ is surjective and, after following $f_i$ with an element of $\Aut_X(X_\ell)$ if necessary, we may assume that $f_i(a_i) = a_\ell.$ Such an $f_i$ is unique.\footnote{\, Because $X_\ell/X$ is finite \'etale and $X_i$ is connected, $f_i$ is determined by its value at any geometric point of $X_i.$ Indeed, let $f,g \in \Hom_X(X_i, X_\ell)$ be such that $f(a_i) = g(a_i) = a_\ell.$  Let $p_i : X^{sc} \to X_i$ be the canonical projection. Then if $f' = f \circ p_i$ and $g' = g \circ p_i,$ we have  $f'(a) = g'(a).$ From the bijection $\Hom_X(X^{sc}, X_\ell) \simeq F(X_\ell)$ we conclude that $f' = g'.$ Since $p_i$ is surjective we conclude that $f = g.$} If $i < j$ and $\varphi_{ij} : X_j \to X_i$ is one of our transition morphisms, then
 $f_i \circ \varphi_{ij} \in \Hom_X(X_j, X_\ell),$ 
 so $j \in I_\ell.$  By uniqueness $f_j = f_i \circ \varphi_{ji}.$
 
 After applying the projection $X_\ell \to \Spec(\ell)$ to the above considerations we obtain the existence of $p_\ell \in \Hom_k\big(X^{sc}, \Spec(\ell)\big)$ such that $p_\ell(a) = a_\ell.$ This induces our desired  morphism $p_s : X^{sc} \to \limproj \Spec(\ell) = \Spec(k_s).$
 
  By \cite[Prop. 3.2.14]{LF} $\pi_1\big(\Spec(k), a\big) \simeq \Gal(k) = \Gal(k_s/k).$ The structure morphism $X \to \Spec(k)$ induces a group homomorphism  $p : \pi_1(X,a) \to \Gal(k_s/k).$ One checks from the explicit construction given above that the structure morphism  $p_s : X^{sc} \to \Spec(k_s)$ is $\pi_1(X,a)-$equivariant. 
 
 (b) We claim that $p_s$ has a section. The geometric point $a_i$ corresponds to a point $x_i$ of the underlying topological space of $X_i.$ The residue field $\kappa(x_i)$ will for convenience be denoted by $k_i.$ We thus have a $k-$scheme morphism $\psi_i : \Spec(k_i) \to X_i.$ 
 A transition morphism $\varphi_{ij} : X_j \to X_i$ yields the local ring morphism $\varphi_{ji}^* :{\mathcal{O}}_{X_i, x_i} \to {\mathcal{O}}_{X_j, x_j}.$ This yields a $k-$field extension $k_i \to k_j,$ hence a $k-$scheme morphism $\psi_{ji}: \Spec(k_j) \to \Spec(k_i).$ 
 
 Since $X_i$ is a finite \'etale extension of $X$ and $x_i \mapsto x,$ $k_i$ is a finite separable extension of $k.$ Thus $\limproj \Spec(k_i) = \Spec(k_s).$
Consider the projection $\pi_i : \Spec(k_s) \to \Spec(k_i),$ and let $g_i = \psi_i \circ \pi_i :\Spec(k_s) \to X_i.$ From the above discussion we see that $\varphi_{ij} \circ g_j = g_i.$ We thus get an induced scheme morphism $a_s : \Spec(k_s) \to \limproj X_i = X^{sc}.$ It is clear from the construction that $a_s$ followed by the structure morphism $p_s : X^{sc} \to \Spec(k_s)$ is the identity.

 \end{remark}

  We will denote
$X \times_k {k_s}$ by $X_{k_s}.$ 
Our point $a_0: \Spec(k) \to X$ gives rise by base change to 
a quasi geometric point $a_s: \Spec(k_s) \to X_{k_s}.$ By
\cite[Proposition 3.3.7]{LF} the sequence
\begin{equation}\label{fundamentalexact}
1 \to \pi_1(X_{k_s}, a_s) \to  \pi_1(X,  a) \buildrel p \over \lgr
\Gal(k) \to 1
\end{equation}
is exact, and in fact split exact. Indeed, the existence of the rational point $a_0 \in X(k)$ yields a natural choice of a section of $p$ that can be understood as follows. Our $a_0 : \Spec(k) \to X$ induces by functoriality a group homomorphism
  $q : \pi_1\big(\Spec(k), b\big) \to \pi_1(X,a)$ where $b : \Spec(k_s) \to \Spec(k)$ is the natural map. Because $a$ was also defined via $a_0$, i.e. $b$ is $a$ followed by the structure map $X \to \Spec(k),$ we see that $q \circ p = \Id.$ Finally $\pi_1\big(\Spec(k), b\big) \simeq \Gal(k_s/k).$ In concrete terms we are identifying a finite Galois extension $\ell$ of $k$ with Galois group $\Gamma$ with the Galois cover $X \times_k \ell$ of $X.$

\section{Loop and toral torsors}

Throughout $X$ denotes a scheme and $\gG$  a group scheme over $X.$
 
\subsection{Generalities about torsors} 

Let $E$ be an fppf sheaf on $X$ equipped with a right action of $\gG.$ Recall that $E$ is said to be a (right)  {\it $\gG-$sheaf torsor over
$X$ } --or simply a $\gG$--sheaf torsor if $X$ is understood-- if there exists a faithfully flat and locally finitely presented morphism $Y \to X$,
 such that $E \times_X Y \simeq \gG
\times_X Y = \gG_{Y},$ where $\gG_{Y}$ acts on itself by right translation.\footnote{\, We are viewing  the $Y$ scheme $\gG_Y$ as an fppf group sheaf on $Y.$} In the above situation we say that $Y$ {\it trivializes} $E.$ In what follows we denote by  $H^1(X,\gG)$ the pointed set of isomorphism classes of $\gG-$sheaf torsors over X. The set $H^1(X,\gG)$ can be computed in terms of cocycles \`a la \v{C}ech as we now briefly recall (see \cite[\S2]{Gi4} for details).

Given a base change $Y\to X$, we denote by  $H^1(Y/X,
\gG)$ the kernel of the base change map
$H^1(X,\gG) \to H^1(Y, {\gG}_{Y}).$  Thus $H^1(Y/X,
\gG)$ can be thought as the set of isomorphism classes of $\gG-$sheaf torsors over $X$ that are trivialized by the base change to $Y.$ As it is customary, and when no confusion is possible,  we will  denote in what follows $H^1(Y,\gG_{Y})$ simply by $H^1(Y,\gG).$

If the base change $Y/X$ is a an fppf cover of $X$, then $H^1(Y/X,
\gG)$ can be computed in terms of cocycles. More precisely $H^1(Y/X,
\gG) = \text{\v{H}}^1_{fppf}(Y/X, \gG).$ The ``set" of fppf covers of $X$ is  directed and by passing to the limit over all such coverings $Y$, we have a bijection of pointed sets
$$ 
\us\longrightarrow{\text{\rm lim}}\,H^1(Y/X,\gG) \to H^1(X,\gG).
$$

Recall that a sheaf torsor $E$ over $X$ under $\gG$ is called {\it
isotrivial} if it is trivialized by some {\it finite} \'etale
cover of $X,$ that is,
$$
[E] \in H^1(X^\pr/X,\gG) \subset H^1(X,\gG)
$$
for some (surjective) $X^\pr \to X$ in $ X_{\fet}.$ We denote by $H^1_{iso}(X,\gG)$ the 
subset of $H^1(X,\gG)$ consisting of classes of isotrivial torsors.

\begin{proposition}\label{discrete1} 
Assume that $X$  is  connected and noetherian, and that $\gG$ is locally of finite
presentation over $\gX.$  Then
$$
H^1_{iso}(X,\gG) = \;\text{\rm ker}\, \big(H^1(X,\gG) \to H^1(X^{sc},\gG)\big).
$$
\end{proposition}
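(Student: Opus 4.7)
The plan is to verify both inclusions, using the presentation $X^{sc} = \limproj X_i$ by the Galois family of the previous section together with the locally of finite presentation hypothesis on $\gG$.

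For the inclusion $H^1_{iso}(X,\gG) \subseteq \ker\bigl(H^1(X,\gG) \to H^1(X^{sc},\gG)\bigr)$ the argument is a straightforward pullback: if $[E]$ is trivialized by some $X' \in X_{\fet}$, cofinality of the Galois system $(X_i)$ in $X_{\fet}$ supplies an index $i$ and an $X$-morphism $X_i \to X'$; pulling back the trivialization of $E_{X'}$ along this morphism gives $[E_{X_i}]=0$, and composing with the canonical $X^{sc} \to X_i$ forces $[E_{X^{sc}}]=0$.

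The substantive direction is the reverse inclusion. I would start from $[E]$ in the kernel, so $E_{X^{sc}}$ is trivial as a $\gG_{X^{sc}}$-torsor; pick a section $s \in E(X^{sc})$. The plan is to descend $s$ to some $s_i \in E(X_i)$, for then $E_{X_i}$ is trivial and hence $[E] \in H^1(X_i/X,\gG) \subseteq H^1_{iso}(X,\gG)$. The descent proceeds in two steps. First, $E$ inherits the locally of finite presentation property from $\gG$: over any trivializing fppf cover $Y \to X$ one has $E\times_X Y \simeq \gG_Y$, and this property descends fppf-locally. Second, since the $X_i$ are finite \'etale over the noetherian $X$ (hence quasi-compact, quasi-separated) with affine transitions $X_j \to X_i$, one applies the limit--colimit formula \cite[Prop.~8.13.1]{EGAIV}, exactly as in the proof of Proposition \ref{discrete}, to conclude $E(X^{sc}) = \limind E(X_i)$, which produces the desired $s_i$.

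The only genuine technical point that I anticipate is this last descent: ensuring that $E$, viewed as an fppf sheaf, is itself locally of finite presentation so that the limit--colimit formula applies to it. If one prefers to bypass any representability discussion, the formula can be verified fppf-locally on $X$ (where $E$ becomes $\gG$) and then transported back by the sheaf property; no further obstacle arises.
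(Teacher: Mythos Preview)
Your argument is correct, and the forward inclusion matches the paper's exactly. For the reverse inclusion you take a genuinely different route: you pick a section $s \in E(X^{sc})$ and descend it to some $E(X_i)$ via the limit--colimit formula of \cite[Prop.~8.13.1]{EGAIV} applied to the torsor $E$ itself (justified, as you note, by fppf descent of the locally-of-finite-presentation property, together with the fact that filtered colimits commute with the equalizer expressing the sheaf condition). The paper instead invokes the Grothendieck--Margaux theorem \cite{Mg1}, which gives directly that
\[
\limind_i \check{H}^1_{fppf}(X_i,\gG) \;\longrightarrow\; \check{H}^1_{fppf}(X^{sc},\gG)
\]
is bijective, so triviality over $X^{sc}$ forces triviality over some $X_i$. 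Your approach is more self-contained (it stays within EGA~IV and avoids the external citation), at the cost of spelling out the descent of the limit property for $E$; the paper's approach is shorter once one accepts the cited result, and works uniformly without any discussion of whether $E$ is representable.
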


\begin{proof}
Assume $E$ is trivialized by $X^\pr\in X_{\fet}.$  By (\ref{FG2}) there exists $i \in I$ such that $\Hom_X(X_i, X') \neq \emptyset.$ Then $E\times_X
X_i = E\times_X X^\pr \times_{X^\pr}X_i = \gG_{X^\pr}\times_{X^\pr}
X_i = \gG_{X_i}$ so that $E$ is trivialized by $X_i.$  The image of
$[E]$ on $H^1(X^{sc}, \gG)$ is thus trivial.
 
Conversely assume $[E]\in H^1(X,\gG)$ vanishes under the base change
$X^{sc}\to X.$  Since the $X_i$ are quasicompact and
quasiseparated, and $\gG$ is locally of finite presentation, a
theorem of Grothendieck-Margaux \cite{Mg1} shows that the canonical
map
$$
\limind \;\text{\v{H}}^1_{fppf}(X_i,\gG) \to \text{\v{H}}^1_{fppf}(X^{sc},\gG)
$$
is bijective. Thus $E\times_{X} X_i\simeq \gG_{X_i}$ for some $i\in
I.$
\end{proof}

The question of whether a given sheaf torsor $E$ is representable by an $X-$scheme is quite delicate. If this is the case, we say that $E$ is a {\it torsor}. Representability holds if  $\gG$ is ind quasi affine (see \cite[Tag 0AP6]{SP}). This covers all the cases we are interested in. For future use, however, it is important to formulate this general material, and the definition of loop torsors, at the level of sheaves.
\subsection{Toral torsors}\label{tt}
Let $\mathfrak{G}$ be an $X-$group which is locally of finite type. If $x \in X$ the fibre of $\mathfrak{G}_x$  is an algebraic group over $\kappa(x).$ Following \cite{SGA3}  we say that a closed subgroup $\gT$ of $\mathfrak{G}$ is a {\it maximal torus} if it is a torus (as a group scheme over $X$) and $\gT_x$ is a maximal torus of $\mathfrak{G}_x$ for all $x \in X.$

Let $[E] \in H^1(X, \gG).$ We can then consider the twisted $X-$group sheaf ${_E}\gG = E \wedge ^{\gG} \gG.$ We will henceforth assume that ${_E}\gG$ is representable, that is, a group scheme over $X$ (for example if $\gG$ is ind quasi affine over $X$). This $X-$group is fppf locally isomorphic to $\gG,$ hence locally of finite type. We say that $E$ is a {\it toral} $\gG-${\it torsor} if the $X-$group ${_E}\gG$ admits a maximal torus.\footnote{\, If $\gG = \bG_X$ for some algebraic $k-$group $\bG$ we recover  the definition of toral $\bG-$torsor given in \cite{GP0}.}


\subsection{Loop torsors}

Throughout this section $k$ will denote a
field, $\gX$ a geometrically connected noetherian scheme over $k$, and
$\gX^{sc} = \limproj\;\gX_i$ its simply connected cover of $X$ with respect to a geometric point $a$ of $X$ arising from a rational point $a_0 \in X(k)$ as
described in \S\ref{AFG}. 
By Remark \ref{ks} we have a structure morphism $p_s : X^{sc} \to \Spec (k_s).$ This yields a natural group homomorphism $p_s^{\bG} : \bG(k_s) \to \bG(X^{sc}).$ By Remark \ref{ks}, $p_s^{\bG}$ is $\pi_1(X,a)-$equivariant. More precisely, the group $\pi_1(X,a)$ acts on $k_s,$ hence on $\bG(k_s),$ via the
group homomorphism $\pi_1(X,a) \to\,\Gal \,(k)$ of
(\ref{fundamentalexact}). This action is continuous and together
with $p_s^{\bG}$ yields a map of pointed sets
$$
H^1\big(\pi_1(X,a),\bG(k_s)\big) \to
H^1\big(\pi_1(X,a),\bG(X^{sc})\big).\footnote{\, We remind the reader that these $H^1$ are defined in the ``continuous" sense (see Remark \ref{continuous}).}
$$
 On the other hand, by Proposition \ref{discrete} and basic
properties of torsors trivialized by Galois extensions, we have
$$
\begin{aligned}
H^1\big(\pi_1(\gX,a),\bG(\gX^{sc})\big)
&= \limind \;H^1\big(\text{\rm Aut}_\gX(\gX_i),\bG(\gX_i)\big)\\
&= \limind \;H^1(\gX_i/\gX,\bG)\subset H^1(\gX,\bG).
\end{aligned}
$$

By means of the foregoing observations we can make the following.
\begin{definition} \label{looptorsor} A $\bG-$sheaf torsor $E$
over $X$ is called a {\it loop sheaf torsor} if its
isomorphism class $[E]$ in $H^1(X,\bG)$ belongs to the image of the
composite map
$$
H^1\big(\pi_1(X,a),\bG(k_s)\big) \to
H^1\big(\pi_1(X,a),\bG(X^{sc})\big)\subset H^1(X,\bG).
$$
\end{definition}
If $E$ is representable, then we say that it is a {\it loop torsor.}

\medskip

 \subsection{Variations of the fundamental group}

For applications of loop torsors or sheaf torsors, the fundamental group $\pi_1(X,a)$ is 
 often too big
and it suffices to work with some of its quotients. Below we define two of the most useful cases.

As above, $k$ is a field  and $k_s$ the separable closure of  
$k$ in a fixed algebraic closure $\overline{k}$ of $k.$ We let $p$ be the characteristic exponent of $k.$ $\Gamma_k=\Gal(k_s/k)$ will denote the absolute Galois group of $k$.

For a profinite group $\mathcal{G}$, we denote by $\mathcal{G}^{(p')}$ its maximal prime to $p$
quotient: it is the quotient of $\mathcal{G}$ by the closure of the (normal) subgroup
$\mathcal{G}^{[p]}$ generated by the pro-$p$--Sylow subgroups of $\mathcal{G}$.
Applying this construction to $\pi_1(X,a)$ we then get the profinite group
$\pi_1( X, a)^{(p')}$. 

Another useful quotient of $\pi_1(X,a)$ is the following one.
Since $\pi_1( X_{k_s}, a)^{[p]}$ is normal and closed in $\pi_1( X, a)$,
we can define the quotient $\pi_1( X, a)^{(p'-geo)}= \pi_1( X, a) / \pi_1( X_{k_s}, a)^{[p]}$.
Altogether we have a commutative  diagram of exact sequences of 
profinite groups
\[
\xymatrix{
1 \ar[r]& \pi_1( X_{k_s}, a) \ar[r] \ar[d]& \pi_1( X, a) \ar[d] \ar[r]&\Gal(k_s/k) \ar[r]
\ar[d]^{=}& 1 \\
1 \ar[r]& \pi_1( X_{k_s}, a)^{(p')} \ar[d]^{=} \ar[r]& \pi_1( X, a)^{(p'-geo)}
\ar[d] \ar[r]&\Gal(k_s/k) \ar[d] \ar[r]& 1 \\
1 \ar[r]& \pi_1( X_{k_s}, a)^{(p')} \ar[r]& \pi_1( X, a)^{(p')} \ar[r]&\Gal(k_s/k)^{(p')} \ar[r]& 1 
}
\]
 By Galois theory, we have the associated  cover $X^{sc, p'-geo}$ (resp.\
 $X^{sc, p'}$) of $X$ called the universal geometrical $p'$-cover  (resp.\ 
 universal $p'$--cover). In particular $k_s$ admits the
 maximal $p'$-Galois subextension $k^{(p')}$.

In what follows a {\it Galois $p'-$geometric cover of $X$} means a finite Galois cover of $X$ whose Galois group arises as a quotient of $\pi_1(X,x)^{p'-geo}.$

\begin{definition} \label{looptorsor1} A $\bG-$(sheaf) torsor $E$
over $X$ is called a {\it $p'$-geometric loop torsor} if its
isomorphism class $[E]$ in $H^1(X,\bG)$ belongs to the image of the
composite map
$$
H^1\big(\pi_1(X,a)^{(p'-geo)},\bG(k_s)\big) \to
H^1\big(\pi_1(X,a),\bG(X^{sc})\big)\subset H^1(X,\bG).
$$
\end{definition}

\begin{definition} \label{looptorsor2} A $\bG-$(sheaf) torsor $E$
over $X$ is called a {\it $p'$-loop torsor} if its
isomorphism class $[E]$ in $H^1(X,\bG)$ belongs to the image of the
composite map
$$
H^1\big(\pi_1(X,a)^{(p')},\bG(k^{(p')})\big) \to
H^1\big(\pi_1(X,a),\bG(X^{sc})\big)\subset H^1(X,\bG).
$$ 
\end{definition}

According to the definitions we thus have the inclusions of pointed sets 
$$
H^1_{p'-loop}(X,G) \subseteq  H^1_{\substack{p'-geo-loop}}(X,G) \subseteq H^1_{loop}(X,G).
$$


\section{The loop nature of  toral torsors for extensions of locally constant $k-$groups by reductive groups}

Throughout this section $k$ is
a field whose characteristic exponent will be denoted by $p,$ and
$\gX$ a geometrically connected noetherian scheme over $k.$ 

Let us begin by explaining the relevance of considering $k-$groups obtained by extending a locally constant group by a reductive group.  If $\bG$ is a reductive $k-$group, then 
$ \bAut(\bG)$ is a smooth $k-$group that fits in the exact sequence
$$1 \to \bG_{ad} \to \bAut(\bG) \to \bOut(\bG) \to 1$$
where $\bG_{ad} = \bAut(\bG)^{\circ} = \bG/\mathcal{Z}(\bG)$ is the adjoint group of $\bG$, and $\bOut(\bG)$ is a twisted constant group. $\bAut(\bG)$ is an algebraic group if $\bG$ is semisimple, but in general it is only locally algebraic. For example, if $\bG$ is a two-dimensional split torus, then $\bAut(\bG) = \bOut(\bG)$ is the constant $k-$group corresponding to the (abstract) group ${\rm GL}_2(\Z).$  

Up to isomorphism, reductive $X-$groups which are twisted forms of $\bG_X$  are classified by $H^1\big(X, \bAut(\bG)\big).$  We may want to understand if a given reductive $X-$group is loop, or if it has a maximal torus. This involves an $X-$torsor $E$ under $\bAut(\bG)$. In \cite{G} and \cite{GiPa} it is shown that the right approach to the question is through an $H^1(X, \tG)$ where, roughly speaking, $\tG$ plays the role of $\bAut(\bG)$ with the adjoint group replaced by $\bG$ itself. This $H^1(X, \tG)$ keeps track of {\it both} $E$ and the twisted $X-$group ${_E}\bG.$
\medskip

\centerline{***}

In what follows we will deal with an exact sequence of $k-$groups
\footnote{\, Throughout exact sequence is meant as an exact sequence of fppf sheaves of the base scheme.}
\begin{equation}\label{mainsequence}
1 \to \bG \to \tG \to \bJ \to 1
\end{equation}
where 

(a) $\bG$ is reductive, and

(b) $\bJ$ is twisted constant.

\begin{lemma}\label{localg}
The $k-$group $\tG$ is locally algebraic, smooth and ind quasi affine.
\end{lemma}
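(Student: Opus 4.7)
The plan is to reduce the lemma to the case of a finite base by writing $\bJ$ as a filtered colimit of finite \'etale $k$-schemes, and then to handle each finite piece by standard descent for torsors under an affine group over a disjoint union of spectra of finite separable extensions of $k$.

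\textbf{Step 1: Decompose $\bJ$.} Since $\bJ$ is twisted constant, $\bJ_{k_s}$ is the constant $k_s$-group scheme associated to the abstract group $J = \bJ(k_s)$, on which $\Gamma_k=\Gal(k_s/k)$ acts continuously. Write $J$ as the directed union $J=\bigcup_\alpha J_\alpha$ of its finite $\Gamma_k$-stable subsets (e.g.\ finite unions of Galois orbits). By Galois descent each $J_\alpha$ corresponds to a finite \'etale $k$-scheme $\bJ_\alpha$ that embeds as an open subscheme of $\bJ$, and $\bJ=\bigcup_\alpha \bJ_\alpha$ as fppf sheaves.

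\textbf{Step 2: Pull back the extension.} Define the open subsheaf $\tG_\alpha=\tG\times_{\bJ}\bJ_\alpha$ of $\tG$. The projection $\tG_\alpha\to\bJ_\alpha$ is an fppf $\bG$-torsor (since $\tG\to\bJ$ is, because (\ref{mainsequence}) is a short exact sequence of fppf sheaves). Now $\bJ_\alpha$ is, after a finite Galois base change $k'/k$ that splits it, a finite disjoint union $\bigsqcup\Spec(k')$ over which the torsor trivializes; the torsor over $\bJ_\alpha$ itself is thus obtained by Galois descent from a split torsor, and since $\bG$ is affine, \'etale descent guarantees that $\tG_\alpha$ is represented by an affine $k$-scheme of finite type. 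Smoothness of $\tG_\alpha\to k$ follows from smoothness of $\bG$ together with smoothness (\'etaleness) of $\bJ_\alpha$, applied fppf-locally on $\bJ_\alpha$.

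\textbf{Step 3: Glue.} The family $\{\tG_\alpha\}_\alpha$ is directed and its union (as subsheaves of $\tG$) equals $\tG$, so $\tG$ is represented by the colimit of these open subschemes along open immersions, which is a $k$-scheme. Since each $\tG_\alpha$ is of finite type, $\tG$ is locally of finite type, i.e.\ locally algebraic; since each $\tG_\alpha$ is smooth, $\tG$ is smooth; and since each $\tG_\alpha$ is quasi-compact and affine (hence quasi-affine) and together they give an increasing open cover of $\tG$, the scheme $\tG$ is ind quasi affine (in fact ind affine).

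\textbf{Main obstacle.} The only non-formal point is Step 2, namely the representability and affineness of the $\bG$-torsor $\tG_\alpha\to\bJ_\alpha$. This is a standard application of Galois descent for affine schemes: the base $\bJ_\alpha$ is a finite disjoint union of spectra of finite separable extensions of $k$, over which every fppf $\bG$-torsor is representable by an affine scheme that is a twisted form of $\bG$ over each such extension. Everything else is a routine gluing and fppf-local transfer of smoothness and local finite-type-ness.
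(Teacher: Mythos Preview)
Your overall strategy is correct and genuinely different from the paper's. The paper first reduces to $k$ algebraically closed by invoking Gabber's theorem that the property ``ind quasi affine'' descends along fppf covers. Over $\bar k$ the quotient $\bJ$ is a constant group, and the authors then show directly that $\tG(\bar k)\to\bJ(\bar k)$ is surjective (a point in $\bJ(\bar k)$ lifts to $\tG(R)$ for some finite type $\bar k$-algebra $R$, and one specializes at a maximal ideal). Hence $\tG_{\bar k}$ is literally a disjoint union of copies of $\bG_{\bar k}$, from which all three properties are read off. Your approach avoids the reduction to $\bar k$ and Gabber's descent result entirely: you slice $\bJ$ into finite \'etale pieces $\bJ_\alpha$ and observe that $\tG_\alpha\to\bJ_\alpha$ is a $\bG$-torsor over an affine base, hence affine by fppf descent of affineness. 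This is arguably more elementary and more transparent; the paper's route is shorter but hides work in the cited descent statement.

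One genuine (though easily repaired) gap in your Step~2: you assert that after a finite Galois base change $k'/k$ splitting $\bJ_\alpha$, the torsor $\tG_\alpha\to\bJ_\alpha$ \emph{trivializes}. That is not justified: over each component $\Spec(k'')$ of $\bJ_\alpha$ the restriction of $\tG_\alpha$ is a class in $H^1(k'',\bG)$, and for a finite extension $k''/k$ this set is generally nontrivial (think $\bG=\mathbf{PGL}_n$). Fortunately you do not need trivialization at all. Since $\tG_\alpha\to\bJ_\alpha$ is fppf-locally isomorphic to $\bG\times\bJ_\alpha\to\bJ_\alpha$ and $\bG$ is affine, fppf descent of the property ``affine morphism'' already gives that $\tG_\alpha\to\bJ_\alpha$ is affine; since $\bJ_\alpha$ is affine over $k$, so is $\tG_\alpha$. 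With this correction your argument goes through: smoothness and finite type also descend fppf, and your Step~3 gluing gives ind quasi affineness because every quasi-compact open of $\tG$ sits in some affine $\tG_\alpha$.
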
 
\begin{proof} We may assume without loss of generality that $k$ is algebraically closed.\footnote{\, That the property of being ind quasi affine is stable under base change is clear; that this property descends is a result of Gabber \cite[Tag 0APK]{SP}.} We claim that then $\tG$ is a disjoint union of copies of $\bG,$ in particular locally algebraic, \'etale and, by \cite[37.66.1]{SP}, ind quasi affine.

To establish the claim it will suffice to show that the map $\tG(k) \to \bJ(k)$ is surjective. Let $x \in \bJ(k).$ There exists a $k-$ring of finite type $R$ and an element $g_R \in \tG(R)$ such that $g_R$ maps to the image $x_R$ of $x$ in $\bJ(R).$
If $\mathfrak{m}$ is a maximal ideal of $R,$ then by functoriality we see that under the map $\tG(R) \to \bJ(R/\mathfrak{m}) = \bJ(k)$ the element $g_R$ maps to $x.$
\end{proof}

\begin{remark} It follows from the Lemma that all $\tG-$sheaf torsors over a $k-$scheme $X$ are representable, that is, they are torsors.
\end{remark}
Since  $\bG$ is quasicompact (being affine) the morphism $\bG \to \tG$ is a closed immersion (\cite[VI$_{\rm A}$ Prop. 2.5.2]{SGA3}). We will henceforth identify $\bG$ with a closed subgroup of $\tG.$

For details about the connected component of the identity $\tG ^{^\circ}$ of $\tG,$ see \cite[VI$_{\rm A,B}$]{SGA3}. Since $\bG$ is connected and $\bJ$ is a discrete topological space, we see that $\tG^{^\circ} = \bG.$ If $\bS$ is a $k-$subgroup of $\tG$ that is a torus, then $\bS$ is connected and therefore $\bS \subset \bG.$ Thus if $\bT$ is a maximal torus of $\bG,$ then $\bT$ is a maximal torus  of $\tG$ (see \S \ref{tt}). If follows that the $k-$functor $\bTor(\tG)$ of maximal tori of $\tG$ is a $k-$scheme isomorphic to $\bTor(\bG)$ . Moreover, $\bTor(\bG) \simeq \bG/\bN_{\bG}(\bT)$ (see \cite[ ${\rm XXII}$  \S5.8] {SGA3}).


Let $\tE$ be a $\tG-$torsor over $X$ and consider the reductive $X-$group  ${_{\tE}}\bG := {_{\tE}}(\bG_X).$
We fix  a maximal torus $\bT$ of $\bG$ and recall  that $\bN = \bN_{\tG}(\bT)$ is a closed $k$-subgroup of $\tG$ (see \cite[$\rm{VI}_B$ 6.2.5]{SGA3}), hence locally algebraic. Let $\tE$ be a $\tG-$torsor over $X$ and consider the $X-$group ${_{\tE}}\tG := {_{\tE}}(\tG_X) := \tE \wedge^{\tG} \tG.$\footnote{\,  A priori ${_{\tE}}\tG$ is a group sheaf over $X.$ That it is representable --i.e., that it is a group scheme over $X$-- follows from \cite[Lemma 2.7]{G} and \cite[Tag 0APK]{SP}, given that $\tG_X$ is an ind-affine scheme over $X$ (Lemma \ref{localg} and \cite[Tag 0AP7]{SP}).}  It is a twisted form (fppf) of $\tG,$ hence locally of finite presentation over $X.$
Since $\bG$ is a normal subgroup of $\tG$ we can also consider the $X-$group ${_{\tE}}\bG := {_{\tE}}(\bG_X) := \tE \wedge^{\tG_X} \bG_X.$ It is a twisted form of $\bG_X,$ hence reductive.

Assume for the moment that $[\tE] \in \im[H^1(X, \bN) \to H^1(X, \tG)].$ We can then consider the twisted $X-$torus ${_{\tE}}\bT  := \tE \wedge^{\bN_X} \bT_X.$ The main result of this section is the following.

\begin{theorem}\label{t->l} Assume that  $X = \Spec(A)$ is affine, integral, normal, has a rational point, i.e. $X(k) \neq \emptyset.$  Let $\tE$ be a $\tG$-torsor over $X$ such that 
 $_{\tE}\tG$ admits a maximal $X$--torus $\gT'$ (in other words $\tE$ is toral).
Let $Y=\Spec(B)$ be a  (connected) Galois 
 cover of $X$ satisfying the  following three conditions:

\smallskip

{\rm (a)} $\Pic(B)=0$;

\smallskip

{\rm (b)} $\bT_B$ is split;

\smallskip

{\rm (c)} $\tE \times_X Y$ is trivial.

\smallskip

\noindent
Then:

\smallskip

{\rm (i)} The $\tG_X$-torsor $\tE$ admits a reduction to an $\bN$-torsor $\hE$ such that $\hE$ is split by $Y/X$ and  $_{\hE}\bT$ is isomorphic to $\gT'$.

\smallskip

{\rm (ii)}   Assume furthermore that  the $X-$torus $\gT'$ is split by a finite Galois extension $X'$ of $X$ of $p'$-degree with trivial Picard group. Then $\tE$ is a loop $\tG-$torsor.

\smallskip

{\rm (iii)}   Assume furthermore that $Y/X$ is a $p'$-geometric cover and that the $X-$torus $\gT'$ is split by a finite Galois extension $X'$ of $X$ of $p'$-degree with trivial Picard group. Then $\tE$ is a $p'$-geometric loop $\tG-$torsor.




\end{theorem}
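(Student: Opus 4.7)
\emph{Part (i).} The identification $\bTor(\tG) \simeq \tG/\bN$ (every maximal torus of $\tG$ lies in $\bG = \tG^{\circ}$, and $\bN$ is the stabilizer of $\bT$ under $\tG$-conjugation) twists to $\bTor({_{\tE}}\tG) \simeq {_{\tE}}(\tG/\bN)$. The maximal torus $\gT'$ thus corresponds to a global section of this quotient; by the standard correspondence between quotient sections and reductions of structure, this section cuts out an $\bN$-subtorsor $\hE \subset \tE$ with $\hE \wedge^{\bN}\tG \simeq \tE$ and ${_{\hE}}\bT \simeq \gT'$. To show $\hE$ is split by $Y/X$: trivializing $\tE_Y$ via (c) identifies ${_{\tE}}\tG_Y$ with $\tG_Y$ and presents $\gT'_Y$ as a maximal torus of $\tG_Y$, so $\hE_Y$ is the $\bN$-torsor transporting $\bT_Y$ to $\gT'_Y$ inside $\tG_Y$; its triviality is equivalent to $\tG(Y)$-conjugacy of the two tori. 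From the exact sequence $1\to \bT\to \bN\to \bW\to 1$, conditions (a) and (b) yield $H^1(Y, \bT_Y) = \Pic(B)^{\rank \bT} = 0$ by Hilbert 90, so $H^1(Y, \bN)$ injects into $H^1(Y, \bW)$; the image of $[\hE_Y]$ in $H^1(Y, \bW)$, classifying the character-lattice twist of $\gT'_Y$ relative to the split $\bT_Y$, is then shown to vanish by a descent argument using the normality of $Y$ (inherited from $X$) and the finite étale nature of $\bW$.

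\emph{Parts (ii) and (iii).} Given $\hE$ split by $Y/X$, Galois descent produces a cocycle $z \in Z^1(\Gal(Y/X), \bN(Y))$ representing $[\hE]$ and hence $[\tE]$. The plan is to rectify $z$, within its cohomology class, to a cocycle with values in $\tG(k_s)$ via a Borel--Mostow semilinear correction. The Weyl projection $\bar z : \Gal(Y/X)\to \bW(Y)$ already lands in the finite group $\bW(k_s)$ because $\bW$ is twisted constant over $k$. The torus component is then controlled by the additional hypothesis that $\gT'$ splits over a finite Galois $p'$-cover $X'/X$ with $\Pic(X')=0$: pulled back to $X'$, the torus $\gT'_{X'}$ is split with trivial $H^1$, and a Hilbert 90 argument coupled with a $p'$-averaging (restriction/corestriction along $X'/X$) permits moving the torus component of $z$ into $\gT'(k_s)\subset \tG(k_s)$. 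Reassembling the Weyl and torus pieces yields a cocycle in $Z^1(\pi_1(X,a), \tG(k_s))$, which establishes (ii). For (iii), the further hypothesis that $Y/X$ is $p'$-geometric guarantees that $\Gal(Y/X)$ is a quotient of $\pi_1(X,a)^{(p'-\mathrm{geo})}$, so the constructed cocycle automatically factors through this profinite quotient.

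\emph{Principal obstacle.} The heart of the argument is the Borel--Mostow rectification in (ii): moving $z$ across the short exact sequence $1\to \bT\to \bN\to \bW\to 1$ from $\bN(Y)$-values down to $\bN(k_s)$-values coherently requires a torus-based section of $\bN$, Hilbert 90 on $\gT'_{X'}$, and a $p'$-averaging across $X'/X$ that annihilates the resulting cohomological obstruction. It is precisely the $p'$-degree hypothesis on $X'/X$ combined with triviality of $\Pic(X')$ that makes this rectification possible. The splitting of $\hE$ over $Y$ in (i) is a lower-stakes instance of the same cohomological pattern, where Hilbert 90 for $\bT_Y$ collapses the relevant torus obstruction directly.
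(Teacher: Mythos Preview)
Your outline has the right skeleton but two real gaps, and one misplaced reference.

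\textbf{Part (i).} The assertion that the image of $[\hE_Y]$ in $H^1(Y,\bW)$ vanishes ``by a descent argument using the normality of $Y$ and the finite \'etale nature of $\bW$'' is unjustified. Normality of $Y$ and \'etaleness of $\bW$ do not force a $\bW$-torsor to be trivial; already over a field a split reductive group can carry non-split maximal tori, and the associated class in $H^1$ of the Weyl group is nontrivial. The paper does not attempt to kill this $\bW$-class at all. Instead it proves directly that $\bT_Y$ and (the transport of) $\gT'_Y$ are $\bG(Y)$-conjugate: since $\Pic(B)=0$ and $\bT_B$ is split, one has $H^1(Y,\gB)\simeq H^1(Y,\bT_Y)=0$ for a Borel $\gB\supset\bT_Y$, hence $\bG(Y)$ acts transitively on Borel subgroups of $\bG_Y$, and inside a fixed Borel all maximal tori are conjugate. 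Conjugacy of the tori then trivializes $\hE_Y$ in one stroke, without ever passing through $H^1(Y,\bW)$.

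\textbf{Parts (ii)--(iii).} The phrase ``Borel--Mostow semilinear correction'' is a confusion with the converse theorem (loop $\Rightarrow$ toral); no Borel--Mostow argument appears in this direction. More importantly, you skip what the paper isolates as Step~1. You correctly observe that the Weyl image $\bar z$ already takes values in $\bW(k_s)$, but ``reassembling the Weyl and torus pieces'' hides the real obstruction: one must \emph{lift} $\bar z$ to a class in $H^1\bigl(\pi_1(X),\bN(k_s)\bigr)$, and the obstruction to doing so lives in $H^2\bigl(\pi_1(X),{_F}\bT(k_s)\bigr)$. The paper kills it using the rational point $a_0\in X(k)$: the induced section $\Spec(k_s)\to X^{sc}$ furnishes a retraction showing that $H^2\bigl(\pi_1(X),{_F}\bT(k_s)\bigr)\to H^2\bigl(\pi_1(X),{_F}\bT(X^{sc})\bigr)$ is injective, and since $\bar z$ visibly lifts over $X^{sc}$ (to $[\hE]$ itself) the obstruction vanishes. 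Only \emph{after} securing this $\bN(k_s)$-valued lift $[E]$ does the paper twist by it and run the restriction--corestriction argument you sketch; your $p'$-averaging along $X'/X$ combined with $\Pic(X')=0$ and Hilbert~90 does correspond to the paper's Step~2, though the paper implements it via the Kummer sequence for ${_E}\bT$ (passing to $\mu_m$-coefficients, which land in $k_s$) rather than by averaging cochains directly.
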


The proof, which will be given at the end of the section,  involves some preliminary results and considerations.

\subsection{Torsors under locally constant groups}

\begin{lemma}\label{K} Let $\bK$ be a twisted contant $k-$group. Then $\bK$ is \'etale and $\bK_s = \bK \times_k k_s$ is constant.
\end{lemma}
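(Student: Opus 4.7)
The plan is to unwind the definition of ``twisted constant'' and combine it with two standard facts: étaleness is étale-local on the base, and every étale scheme over a separably closed field is a disjoint union of copies of the base. By hypothesis there is a finite étale cover $\Spec(\ell) \to \Spec(k)$ such that $\bK_\ell$ is a constant $\ell$-group scheme, i.e.\ isomorphic to $\bigsqcup_{\gamma \in \Gamma}\Spec(\ell)$ for some abstract group $\Gamma$.

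First I would show that $\bK$ is étale over $k.$ The structure morphism $\bigsqcup_{\gamma \in \Gamma}\Spec(\ell) \to \Spec(\ell)$ is a disjoint union of copies of the identity, hence étale; and since étaleness of morphisms is local on the base in the étale topology, faithfully flat descent along $\Spec(\ell) \to \Spec(k)$ yields that $\bK \to \Spec(k)$ is étale. Then to deduce that $\bK_{k_s}$ is constant, I would argue as follows: $\bK_{k_s}$ is étale over the separably closed field $k_s,$ so each of its residue fields is a separable extension of $k_s$ and therefore equal to $k_s.$ Writing $\bK_{k_s}$ as a disjoint union of spectra of its residue fields, we get $\bK_{k_s} = \bigsqcup_{x \in \bK(k_s)} \Spec(k_s),$ and the group scheme structure transfers to an abstract group structure on the index set $\bK(k_s),$ exhibiting $\bK_{k_s}$ as the constant $k_s$-group associated to this abstract group.

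The argument is essentially formal once the right characterizations are invoked, so I do not anticipate a genuine obstacle. The one point requiring mild care is that $\bK$ is not assumed to be of finite type, so the trivializing extension should a priori be an étale $k$-algebra rather than a single finite field extension; but such an algebra is a finite product of finite separable field extensions of $k,$ and restricting to any one factor brings us back to the situation above, so no new difficulty arises.
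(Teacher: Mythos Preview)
Your argument is correct and essentially matches the paper's. The paper proceeds in the reverse order---first citing [DG, I \S4.6.2] for the constancy of $\bK_s$, then deducing \'etaleness of $\bK$ by fpqc descent along $k_s/k$---but the content is the same: constant implies \'etale, and \'etaleness descends along a faithfully flat base change; your version simply trades the literature citation for the explicit observation that an \'etale scheme over a separably closed field is a disjoint union of copies of the base.
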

\begin{proof}  That $ \bK_s$ is constant is shown in \cite[ I \S4.6.2]{DG}.  A constant group is clearly \'etale. By fpqc descent $\bK$ is \'etale.
\end{proof}

\begin{lemma} \label{constantimpliesloop}  Let $\bJ$ be a twisted constant $k-$group  and $E$ a $\bJ-$torsor over X. If $E$ is isotrivial, then $E$ is a loop torsor.
\end{lemma}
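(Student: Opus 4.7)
The plan is to establish that the natural map $p_s^{\bJ}\colon \bJ(k_s) \to \bJ(X^{sc})$ induced by the structure morphism $p_s\colon X^{sc} \to \Spec(k_s)$ of Remark \ref{ks} is a $\pi_1(X,a)$-equivariant \emph{bijection}. Granting this, the lemma follows formally.

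First, I would invoke Lemma \ref{K} to identify $\bJ_{k_s} = \bJ \times_k k_s$ with a constant $k_s$-group scheme, i.e.\ as a scheme it is a disjoint union $\coprod_{j \in \bJ(k_s)} \Spec(k_s)$. Since $X^{sc}$ is connected by Remark \ref{scprop} and acquires a $k_s$-scheme structure via $p_s$, every $k$-morphism $X^{sc} \to \bJ$ is the same datum as a $k_s$-morphism $X^{sc} \to \bJ_{k_s}$, which must factor through exactly one component and therefore corresponds to a unique element of $\bJ(k_s)$. The bijection obtained this way is exactly $p_s^{\bJ}$. Its $\pi_1(X,a)$-equivariance is a direct consequence of the $\pi_1(X,a)$-equivariance of $p_s$ itself, established in Remark \ref{ks}(a).

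The induced map on continuous cohomology
$$H^1\bigl(\pi_1(X,a), \bJ(k_s)\bigr) \to H^1\bigl(\pi_1(X,a), \bJ(X^{sc})\bigr)$$
is therefore a bijection as well. On the other hand, combining Proposition \ref{discrete} with Proposition \ref{discrete1}, and using that every finite \'etale cover of $X$ is dominated by some Galois $X_i$, we obtain
$$H^1\bigl(\pi_1(X,a), \bJ(X^{sc})\bigr) \;=\; \limind H^1(X_i/X, \bJ) \;=\; H^1_{iso}(X, \bJ).$$
Note that Lemma \ref{K} also supplies the local finite presentation hypothesis needed in Propositions \ref{discrete} and \ref{discrete1}, since \'etale morphisms are locally of finite presentation. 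Since $[E] \in H^1_{iso}(X, \bJ)$ by hypothesis, it lies in the image of the composite appearing in Definition \ref{looptorsor}, so $E$ is a loop torsor.

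I do not anticipate a serious obstacle. The only step requiring care is the appeal to Lemma \ref{K} to pass from the abstract ``twisted constant'' condition to a concrete scheme-theoretic description of $\bJ_{k_s}$ as a disjoint union of copies of $\Spec(k_s)$; without that, the connectedness of $X^{sc}$ would not yield the crucial identification $\bJ(X^{sc}) = \bJ(k_s)$. Once this is in hand, the rest is a formal cohomological consequence.
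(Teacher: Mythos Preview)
Your proposal is correct and follows essentially the same route as the paper: both arguments use Lemma \ref{K} to see that $\bJ_{k_s}$ is constant, then exploit the connectedness of $X^{sc}$ and its $k_s$-structure to get the $\pi_1(X,a)$-equivariant bijection $\bJ(k_s)\simeq\bJ(X^{sc})$, and finally invoke Propositions \ref{discrete} and \ref{discrete1} to locate the isotrivial class $[E]$ inside $H^1\bigl(\pi_1(X,a),\bJ(X^{sc})\bigr)$. Your write-up is in fact more explicit than the paper's on why the bijection holds and on where the local-finite-presentation hypothesis is used.
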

\begin{proof} Let $\bJ_s = \bJ \times_k k_s.$ By the last Lemma $\bJ_s$  is constant. In particular $\bJ$ is locally of finite presentation (by fpqc descent). Observe that:
 \smallskip
 
(a) Since $[E]$ is isotrivial and $X$ is noetherian, we see by Proposition \ref{discrete1} that $[E]$ becomes trivial after passage to $X^{sc}.$ We can thus view $[E]$ as an element of $H^1\big(\pi_1(X,a),\bJ(X^{sc})\big).$

\smallskip

(b) The canonical map $ \bJ(k_s) \to \bJ(X^{sc})$ is bijective and, because of  (\ref{splitpi}), an isomorphism 
of $\pi_1(X)-$modules.
\smallskip

It follows from (a) and (b) that $[E] \in H^1(\pi_1(X), \bJ(k_s)).$ Thus $E$ is by definition a loop torsor.
\end{proof}

\begin{corollary}\label{tor-loop} Assume that $X$ is normal. Every $X-$torsor under  a twisted constant $k-$group $\bJ$ is loop. 
\end{corollary}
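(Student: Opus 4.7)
The plan is to show every $\bJ$-torsor $E$ on $X$ is isotrivial, so that Lemma~\ref{constantimpliesloop} immediately yields the conclusion. By Lemma~\ref{K} the group $\bJ$ is \'etale over $k$ (hence locally of finite presentation), so $E\to X$ is an \'etale morphism. Since $X$ is normal, connected, and noetherian, it is integral; let $\eta=\Spec(K)$ denote its generic point.

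I would first trivialize $E$ over the generic fiber. Indeed $E_\eta$ is a $\bJ_K$-torsor and its class lies in $H^1(\Gal(K^s/K),\mathcal{J})$ where $\mathcal{J}:=\bJ(k_s)$, equipped with the discrete topology. A cocycle representing this class is continuous from the compact group $\Gal(K^s/K)$ to the discrete $\mathcal{J}$, hence has finite image $S\subset\mathcal{J}$. Using the continuity of the Galois action on $\mathcal{J}$, the pointwise stabilizer of $S$ is open, and intersecting it with an open normal subgroup on which the cocycle vanishes produces a finite Galois extension $L/K$ through which the cocycle factors. Consequently $E_\eta\times_K L$ is trivial, which yields a morphism $\Spec(L)\to E$ over $\eta\to X$.

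To globalize, I would let $E'$ be the connected component of $E$ containing the image of $\Spec(L)\to E$. Since $E$ is \'etale over the normal scheme $X$, it is locally integral, so $E'$ is irreducible with generic fiber $\Spec(L')$ for a finite separable extension $L'/K$, and $E'\to X$ is \'etale with constant geometric fiber cardinality $[L':K]$. Zariski's Main Theorem factors $E'\hookrightarrow \tilde{E}'\to X$ with $\tilde{E}'$ the normalization of $X$ in $L'$, which is finite over $X$ by normality. The chain $[L':K]=|E'_{\bar x}|\le|\tilde{E}'_{\bar x}|\le[L':K]$ at every $x\in X$ forces equality, so $\tilde{E}'\to X$ is unramified and hence finite \'etale with $E'=\tilde{E}'$. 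The inclusion $E'\hookrightarrow E$ then provides the diagonal section of $E\times_X E'\to E'$, trivializing $E$ over the finite \'etale cover $E'\to X$. The main obstacle is precisely this globalization step: normality is essential for the normalization in $L'$ to be finite over $X$, while the constant-fiber-size consequence of \'etaleness is what forces the normalization itself to be \'etale.
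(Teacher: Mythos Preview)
Your reduction to isotriviality followed by Lemma~\ref{constantimpliesloop} is exactly the paper's route; the paper simply cites \cite[Lemma~2.14]{G} for the isotriviality, whereas you attempt a direct proof. The generic-fibre step is fine, and passing to a connected component $E'$ of $E$ is the right idea. The globalization step, however, has a gap.

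You assert that $E'\to X$ has ``constant geometric fibre cardinality $[L':K]$'' and then feed this into the chain $[L':K]=|E'_{\bar x}|\le|\tilde E'_{\bar x}|\le[L':K]$; but that constancy is precisely what is at stake, so the chain is circular. Moreover, the factorization form of Zariski's Main Theorem you invoke requires $E'\to X$ to be quasi-finite, in particular quasi-compact. Since $\bJ$ may be infinite (e.g.\ $\bOut(\bG)$ for $\bG$ a torus, as in the paper's motivating example), the torsor $E$ need not be quasi-compact, and a connected component of an \'etale $X$-scheme is not quasi-compact for free. So ZMT does not apply as stated.

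What does survive: your observation that the normalization $\tilde E'$ of $X$ in $L'$ is finite over $X$ is correct (trace-form argument, using that $L'/K$ is finite \emph{separable} and $X$ is normal noetherian). One can then bypass ZMT entirely: the universal property of normalization already furnishes an $X$-morphism $g\colon E'\to\tilde E'$ because $E'$ is normal with function field $L'$. The real work is to show that $g$ is an open immersion, e.g.\ by checking on local rings that $\mathcal{O}_{\tilde E',g(y)}\to\mathcal{O}_{E',y}$ is an isomorphism (using that $\mathcal{O}_{E',y}$ is \'etale over $\mathcal{O}_{X,x}$, hence normal with fraction field $L'$, hence contains the integral closure of $\mathcal{O}_{X,x}$ in $L'$, and then that a local \'etale extension of normal local domains with the same fraction field is trivial). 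Once $E'$ sits as an open in the noetherian scheme $\tilde E'$ it is quasi-compact, and one concludes that $E'\to X$ is finite \'etale; then $E'\to X$ trivializes $E$ as you say.
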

\begin{proof} Indeed, any such torsor is isotrivial by  \cite{G} Lemma 2.14.
\end{proof}

\begin{lemma}\label{Exactconstant} Let $1 \to \bF_0 \os{i}\longrightarrow \bF \os{h}\longrightarrow \bF_1$ be an exact sequence of locally algebraic $k-$groups. Assume that $\bF_0$ and $\bF_1$ are  twisted constant $k-$groups. Then $\bF$ is a twisted constant $k-$group.
\end{lemma}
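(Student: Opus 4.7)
My plan is to reduce to the case $k = k_s$ and then show $\bF$ is étale; once étale, being constant over a separably closed field will be automatic.

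First, since the exact sequence $1 \to \bF_0 \to \bF \to \bF_1$ is preserved under base change to $k_s$, and since (by definition and Lemma \ref{K}) a $k$-group is twisted constant if and only if its base change to $k_s$ is constant, it suffices to treat the case where $k = k_s$ and both $\bF_0$ and $\bF_1$ are constant. I would then aim to show that $\bF$ itself is constant.

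Next I would prove $\bF$ is étale by showing that the identity component $\bF^\circ$ (which is a closed subgroup scheme of the locally algebraic group $\bF$, cf.\ \cite[VI$_{\rm A,B}$]{SGA3}) is trivial. The image $h(\bF^\circ)$ is connected and contains the identity of $\bF_1$, so it lies in $\bF_1^\circ = 1$; hence $\bF^\circ \subseteq \ker h = \bF_0$ as a closed subgroup scheme. Since $\bF^\circ$ is connected and $\bF_0$ is étale, hence topologically discrete over $k = k_s$, the scheme $\bF^\circ$ is supported set-theoretically at the identity $e$. As $\bF_0$ is étale its local ring at $e$ is $k$, so any closed subscheme of $\bF_0$ supported at $e$ equals $\Spec k$. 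Therefore $\bF^\circ = \Spec k$ is the trivial group scheme, and $\bF$ is étale.

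Finally, any étale $k$-group $\bF$ over a separably closed field has underlying scheme $\coprod_{\alpha} \Spec k$, and its multiplication and inversion morphisms are determined by an abstract group law on the index set $\bF(k)$; thus $\bF$ is the constant group scheme on $\bF(k)$, which concludes the argument.

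The only delicate point is the passage from $\bF^\circ$ being topologically a single point to being scheme-theoretically trivial; this is precisely where we use that $\bF^\circ$ lies in the \emph{étale} group $\bF_0$, which leaves no room for nilpotents at the identity. Beyond this observation, the proof is a direct application of the structure theory of locally algebraic $k$-groups, and I do not expect any further obstacle.
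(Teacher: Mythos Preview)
Your proof is correct and follows essentially the same route as the paper's: reduce to $k$ separably closed, show $\bF^\circ$ lands in $\ker h \simeq \bF_0$, use that $\bF_0$ is \'etale to force $\bF^\circ$ to be trivial, and conclude that $\bF$ is \'etale hence constant. The only cosmetic difference is at the key step: the paper phrases it as ``$j:\bF^\circ \to \bF_0$ is a monomorphism factoring through $\bF_0^\circ = \{e\}$'' (citing \cite[VI$_{\rm B}$ 1.4.2]{SGA3}), whereas you argue directly that the local ring of the \'etale scheme $\bF_0$ at $e$ is $k$, leaving no room for a nontrivial closed subscheme supported there---the same content, said slightly more concretely.
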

\begin{proof} We may assume without loss of generality that $k$ is separably closed. Thus $\bF_0$ and $\bF_1$ are constant by Lemma \ref{K}. Let $\bF^\circ$ (resp. $\bF_0^\circ$, $\bF_1^\circ$) be the connected component of the identity of $\bF$ (resp. $\bF_0$, $\bF_1$), see \cite[${\rm VI}_A \,2.3$]{SGA3}. Since $\bF_1^\circ =\{e_{\bF_1}\}$ (\cite[II \S5.1.4]{DG}) and $h$ is continuous, we have $h(\bF^\circ) = \bF_1^\circ.$  It follows that $\bF^\circ$ is a $k-$subgroup of $\ker h.$ The morphism $j: \bF^\circ \to \ker h \simeq \bF_0$ is in fact a monomorphism (\cite[${\rm VI}_B$ 1.4.2]{SGA3}), hence injective on points.  But $j$ maps $\bF^\circ$ into $\bF_0^\circ = \{e_{\bF_0}\}.$ Thus  $\bF^\circ = \{e_\bF\}.$ 
By \cite[II \S5.1.4]{DG}  $\bF$ is \'etale, hence constant (\cite[I \S4.6.2]{DG}).
\end{proof}

\subsection{Proof of Theorem \ref{t->l}}


Let $\tE$ be a $\tG-$torsor over $X$ and consider the reductive $X-$group  ${_{\tE}}\bG := {_{\tE}}(\bG_X).$
Recall that $\bT$ is a maximal torus of $\bG$ which is fixed in our discussion.  
By means of the above considerations 
we view $\bT$ as a maximal torus of $\tG.$ As already shown the normalizer $\bN_{\tG} (\bT),$ which we denote by $\bN,$  is a closed $k-$subgroup of $\tG.$ 

\begin{proposition}\label{redGN}
Under the assumptions of Theorem \ref{t->l},
the $\tG_X$-torsor $\tE$ admits a reduction  to an $\bN$-torsor $\hE$ such that $\hE$ is split by $Y/X$ and 
 $_{\hE}\bT$ is isomorphic to $\gT'$.
\end{proposition}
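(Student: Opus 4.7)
My approach is to use the natural bijection between maximal tori of $_{\tE}\tG$ and reductions of $\tE$ to the normalizer $\bN$, and then verify the splitting over $Y$ by a Galois--cocycle computation.

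Recall from the discussion preceding the statement that $\bTor(\tG) \simeq \tG/\bN$, so by twisting $\bTor({_{\tE}}\tG) \simeq \tE \wedge^{\tG} (\tG/\bN)$. The given maximal torus $\gT'$ provides a section $s \colon X \to \tE \wedge^{\tG} (\tG/\bN)$, and I would set
\[
\hE \;:=\; \tE \times_{\tE \wedge^{\tG} (\tG/\bN),\, s}\, X.
\]
Because $\tG \to \tG/\bN$ is an $\bN$-torsor in a canonical way, so is $\hE \to X$; by construction $\hE \wedge^{\bN} \tG \simeq \tE$, so $\hE$ is the required reduction, and $\hE \wedge^{\bN} \bT = \gT'$.

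To show $\hE$ is split by $Y/X$, I pass to cocycles. Fixing a trivialization of $\tE_Y$, represent $\tE$ by a $1$-cocycle $\sigma \mapsto c_\sigma \in \tG(Y)$ for $\sigma \in \Gamma := \Gal(Y/X)$; then $_{\tE}\tG$ becomes $\tG_Y$ with twisted $\Gamma$-action $\sigma \cdot g = c_\sigma\, \sigma(g)\, c_\sigma^{-1}$, and $\gT'_Y$ corresponds to a maximal $Y$-torus $\bT' \subset \tG_Y$ satisfying $c_\sigma\, \sigma(\bT')\, c_\sigma^{-1} = \bT'$ for all $\sigma$. If I can produce $g \in \tG(Y)$ with $g\bT_Y g^{-1} = \bT'$, then the cohomologous cocycle $c'_\sigma := g^{-1} c_\sigma\, \sigma(g)$ normalizes $\bT_Y$ (a direct check: $c'_\sigma\, \sigma(\bT_Y)\, (c'_\sigma)^{-1} = g^{-1} c_\sigma\, \sigma(\bT')\, c_\sigma^{-1} g = g^{-1} \bT' g = \bT_Y$), hence lands in $\bN(Y)$; this cocycle represents $\hE$, visibly split by $Y/X$. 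The isomorphism ${_{\hE}}\bT \simeq \gT'$ over $Y$ is conjugation by $g$, and its Galois-equivariance is the tautological identity $g\, c'_\sigma = c_\sigma\, \sigma(g)$.

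The principal obstacle is thus producing the conjugating element $g \in \tG(Y)$ taking $\bT_Y$ to $\bT'$. Since $\bT'$ is connected it is contained in $\tG_Y^{\circ} = \bG_Y$, so this reduces to a conjugacy statement for maximal tori in the split reductive $Y$-group $\bG_Y$. Under hypotheses (a) $\Pic(B) = 0$ and (b) $\bT_B$ split, this is a standard application of the conjugacy theorems for maximal tori in reductive group schemes over connected affine bases with trivial Picard group (cf.\ SGA3 XXII.3.4 and XXIV.1.5 ter), possibly combined with the observation that $\bT'$ and $\bT_Y$ are of the same type inside the split $\bG_Y$. Once $g$ is at hand, the remaining verifications --- that $\hE \wedge^{\bN} \tG \simeq \tE$ and ${_{\hE}}\bT \simeq \gT'$ --- follow by unwinding the cocycle construction above.
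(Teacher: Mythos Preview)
Your strategy matches the paper's: construct $\hE$ from the section of $\bTor({_{\tE}}\tG)$ determined by $\gT'$, then prove splitting over $Y$ by exhibiting $g \in \bG(Y)$ with $g\bT_Y g^{-1} = \bT'$ and observing that the cohomologous cocycle $g^{-1}c_\sigma\,\sigma(g)$ takes values in $\bN(Y)$. Your cocycle verification is in fact more explicit than the paper's, which defers this mechanism to \cite[lemme~2.6.2]{Gi4}.

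The one genuine gap is the conjugacy step. The SGA3 statements you cite yield only \'etale-local conjugacy of maximal tori, not conjugacy by rational points, and ``of the same type'' is not a usable substitute: over a general affine base with trivial Picard group, two maximal tori of a split reductive group need not be $\bG(Y)$-conjugate (consider a non-split maximal torus of $\PGL_2$ over a field). The paper isolates precisely this point as a separate lemma: under $\Pic(B)=0$ and hypothesis (b), the group $\bG_Y$ is split and all its \emph{split} maximal tori are $\bG(Y)$-conjugate. The argument passes through Borel subgroups --- first conjugate Borels using that the obstruction lies in $H^1(Y,\gB)\simeq H^1(Y,\bT_Y)=\Pic(B)^r=0$, then conjugate maximal tori inside a fixed Borel via \cite[XXVI.1.8]{SGA3}. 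Note this requires $\bT'$ (equivalently $\gT'_Y$) to be split over $Y$; the paper simply asserts this, and you should make the same hypothesis explicit rather than bury it in a vague phrase.
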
 

\begin{proof} We begin by establishing the following auxiliary statement.
\begin{lemma}\label{lem_conjugacy} Let $R$ be a ring satisfying $\Pic(R)=0$.
Let $\gH$ be a reductive $R$-group scheme.
Assume that $\gH$ admits a maximal torus $\gT$ which is split.
Then $\gH$ is split with respect to a Killing couple $(\gT, \gB)$.
Furthermore  split maximal tori (resp.\ Killing couples) of $\gH$
are $\gH(R)$--conjugate to $\gT$ (resp.\ $(\gT, \gB)$).
\end{lemma}

\begin{proof} 
In view of 
  \cite[XXII.5.5.1]{SGA3}, $\gH$ admits a Borel
  subgroup $\gB$ containing $\gT$. The fact that $\gH$ is split is  \cite[Ex.\ 5.1.4]{Co}. 
  For establishing the assertion regarding conjugacy, it is enough
  to show $\gH(R)$-conjugacy for Borel subgroups
  of $\gH$ and $\bB(R)$--conjugacy for maximal tori of $\gB$.
  The last fact is general \cite[XXVI.1.8]{SGA3}. As for the first, 
in view of  \cite[XXII.5.8.3]{SGA3}, the $R$-scheme $\gH/\gB$ represents
the $R$-functor of Borel subgroup schemes of $\gH$.
We have an exact sequence of pointed sets
$$
\gH(R) \to (\gH/\gB)(R) \to H^1(R,\gB)
$$
According to \cite[XXVI.2.3]{SGA3}, the map $H^1(R,\gT) \to H^1(R,\gB)$ is 
bijective; since $\gT \cong \GG_{m,R}^r$, we have 
$H^1(R,\gT)=\Pic(R)^r=0.$ The map $\gH(R) \to (\gH/\gB)(R)$ is thus onto and therefore
Thus $\gH(R)$ acts transitively on the set of Borel subgroups
of $\gH$.
\end{proof}

We now return to the proof of Proposition \ref{redGN}. The fact that $\tE$ admits reduction
$\hE$ such that  $_{\hE}\bT$ is isomorphic to $\gT'$ follows immediately from
\cite[lemme 2.6.2]{Gi4}. But we need a little bit more, namely that we can choose $\hE$ 
such that it is split by $Y/X$.

By our condition there is an isomorphism of group schemes $\phi_*:  \bG_Y \simlgr {_{\widetilde E}\bG}_Y$.
Then $\bT_Y$ and  $\phi_*^{-1}\bigl( \gT'_Y)$
are split maximal of $\bG_Y$. According  to Lemma \ref{lem_conjugacy},
$\bT_Y$ and 
$\phi_*^{-1}\bigl( \gT'_Y)$ are $\bG(Y)$--conjugated,
that is
$^g\bT_Y=\gT'_Y$ for some $g \in \bG(Y)$.
A fortiori they are $\tG(Y)$-conjugated.
 Inspection of the proof of \cite[lemme 2.6.2]{Gi4} 
shows that $g$ defines a reduction of $\tE$
to an $\bN$--torsor $\hE$ 
which is split by $Y$ 
and such that
$X$-torus $_{\hE}\bT$ is isomorphic to $\gT'$.
\end{proof}
\medskip





The quotient $\bN/\bT$ exists in the category of locally algebraic $k-$groups (\cite[$\rm{VI}_A$ 3.3.2 and 5.2]{SGA3}), and will be denoted  by $\bW.$  The kernel of the induced $k-$group morphism $\bN/\bT = \bW \to \bJ$ is $\bN_\bG(\bT)/\bT,$ a finite \'etale, in particular twisted constant, $k-$group that we will denote by $\bW_0$ \cite{SGA3}.

\begin{lemma}\label{WC} The $k-$group $\bW$ is twisted constant.
\end{lemma}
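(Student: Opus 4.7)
The plan is to reduce the claim to Lemma \ref{Exactconstant} by exhibiting an exact sequence of locally algebraic $k$-groups of the form $1 \to \bW_0 \to \bW \to \bJ$ in which the outer terms are twisted constant.

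First, I would identify the kernel of the homomorphism $\bW = \bN/\bT \to \bJ$ inherited from the projection $\tG \to \bJ$ of (\ref{mainsequence}). Since $\bG$ is the kernel of $\tG \to \bJ$, restricting this projection to $\bN \subset \tG$ gives that the kernel of $\bN \to \bJ$ is $\bN \cap \bG = \bN_{\bG}(\bT)$. Passing to the quotient by $\bT$, we see that the kernel of $\bW \to \bJ$ is precisely $\bN_{\bG}(\bT)/\bT = \bW_0$. This yields an exact sequence
$$1 \to \bW_0 \to \bW \to \bJ$$
of fppf sheaves of groups.

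Second, I would verify that the three terms are locally algebraic $k$-groups. The group $\bW$ is locally algebraic by the discussion immediately preceding the lemma, since $\bN$ is a closed subgroup of the locally algebraic $k$-group $\tG$ (by Lemma \ref{localg}) and the quotient by $\bT$ exists in the category of locally algebraic $k$-groups by \cite[${\rm VI}_A$ 3.3.2 and 5.2]{SGA3}. The group $\bW_0 = \bN_{\bG}(\bT)/\bT$ is finite \'etale, hence twisted constant (for instance by Lemma \ref{K}). The group $\bJ$ is twisted constant by hypothesis (b) on the sequence (\ref{mainsequence}).

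With this data in hand, Lemma \ref{Exactconstant} applies directly to the sequence above and yields that $\bW$ is twisted constant. No substantial obstacle is anticipated: the present lemma is a formal consequence of Lemma \ref{Exactconstant}, and the real content is the elementary identification of the kernel of $\bW \to \bJ$ with the Weyl group $\bW_0$ of $\bG$ relative to $\bT$.
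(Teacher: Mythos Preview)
Your proposal is correct and follows essentially the same approach as the paper: both exhibit the exact sequence $1 \to \bW_0 \to \bW \to \bJ$ and invoke Lemma \ref{Exactconstant}. The paper's proof is terser only because the identification of $\bW_0$ as the kernel and the fact that $\bW$ is locally algebraic are already recorded in the paragraph immediately preceding the lemma.
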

\begin{proof} We have seen that $\bW$ is locally algebraic. We have the exact sequence
$$1 \to \bW_0 \os{i}\longrightarrow \bW \os{h}\longrightarrow \bJ.$$
We can now conclude with the aid of Lemma \ref{Exactconstant}.
\end{proof}

\begin{remark} By \cite[Lemma 3.1]{GiPa} the map $h: \bW \to \bJ$ is surjective.
\end{remark}

 Recall that by Remark \ref{ks}(a), $X^{sc}$ comes equipped with $k_s-$scheme structure. 
 This yields a natural homomorphism $\bN(k_s) \to \bN(X^{sc})$ which is $\pi_1(X)-$equivariant since the action of $\pi_1(X)$ on these two groups is given by precomposition with the right action of $\pi_1(X)$ on $k_s$ and $X^{sc}$ respectively. For example, if $\alpha \in \pi_1(X)$ and $f \in \bN(X^{sc}) := \Hom_{Sch/X}(X^{sc}, \bN_X),$ then $({^\alpha}f )(x)= f (x^\alpha)$ for all $x \in X^{sc}.$ This yields a map of distinguished sets $\delta_\bN : H^1\big( \pi_1(X), \bN(k_s)\big) \to H^1\big( \pi_1(X), \bN(X^{sc})\big).$ 
 
 Similar considerations apply to $\bT$, and we have a map of pointed sets $$\delta_\bT : H^1\big( \pi_1(X), \bT(k_s)\big) \to H^1\big( \pi_1(X), \bT(X^{sc})\big).$$


The foregoing discussion and the exact sequence of $k-$groups
$$
1 \to \bT \to \bN \to \bW \to 1
$$
 yields the commutative diagram of pointed sets with exact rows

$$
\begin{CD}
H^1\bigl( \pi_1(X),  \bT(k_s) \bigr) @>{\rho}>>
H^1\bigl( \pi_1(X), \bN(k_s)\bigr) @>{\mu}>>
H^1\bigl( \pi_1(X), \bW(k_s) \bigr)  \\
@V{\delta_\bT}VV @V{\delta_\bN}VV  \mid \mid \\
H^1\bigl( \pi_1(X),\bT(X^{sc})\bigr)
@>{\rho'}>>  H^1\bigl( \pi_1(X), \bN(X^{sc})\bigr) @>{\mu'}>>
 H^1\bigl( \pi_1(X), \bW(X^{sc}) \bigr) \,. \\
\end{CD}
$$
 The last vertical arrow is the equality/identification  given by Corollary \ref{tor-loop}. 

By Proposition~\ref{redGN}, the torsor  $[\hE] $ is isotrivial. By Proposition \ref{discrete} we henceforth view $[\hE]$ as an element of  $H^1\bigl( \pi_1(X), \bN(X^{sc})\bigr).$ To establish  Theorem \ref{t->l} (ii),  it will thus suffice to show that $[\hE]$ is in the image of $\delta_\bN.$

Consider the image $[F'] \in H^1\big(\pi_1(X), \bW(X^{sc})\big)$ of $[\hE]$ under $\mu'$, and let $[F] \in H^1(\pi_1(X), \bW(k_s))$ be the element corresponding to $[F']$ under the bijection/identification $H^1\bigl( \pi_1(X), \bW(k_s) \bigr) \simeq H^1\bigl( \pi_1(X), \bW \bigr) = H^1\bigl( \pi_1(X), \bW(X^{sc}) \bigr).$ 

We divide the proof into two pieces:
\medskip

{\bf Step 1}:  $[F]$ has a preimage $[E] \in H^1(\pi_1(X), \bN(k_s)\bigr).$

{\bf Step 2}: If   $[F]$ has a preimage $[E] \in H^1(\pi_1(X), \bN(k_s)\bigr)$ the Theorem holds.

\medskip

{\bf Proof of Step 1}. By  \cite[Ch. I \S5.6 Prop. 4 ]{Se} the obstacle to the existence of $[E]$ is an element $[b] \in H^2(\pi_1(X), {_{F}}\bT(k_s) \big).$  The structure map $p_s : \Spec (A^{sc}) \to \Spec({k_s})$ we have constructed in Remark \ref{ks}(a) is a $\pi_1(X)-$module map, hence the natural group homomorphism $ {_{F}}\bT(k_s) \to {_{F}}\bT(A^{sc})$ induces a group homomorphism 
$ \eta : H^2(\pi_1(X), {_{F}}\bT(k_s) \big) \to H^2(\pi_1(X), {_{F}}\bT(A^{sc}) \big).$ Since $[F']$ has a lift, we see that $[b]$ belongs to the kernel of  $ \eta.$ It will thus suffice to show that $\eta$ is injective.

By Remark \ref{ks}(b) $p_s$  admits a section $\sigma$ that is $\pi_1(X)-$equivariant. This yields a group homomorphism 
$$ \rho : H^2(\pi_1(X), {_{F}}\bT(A^{sc}) \big) \to H^2(\pi_1(X), {_{F}}\bT(k_s) \big).$$
 
Since $\tau \circ \sigma = \Id_{\Spec(k_s)}$  we conclude that $\rho \circ \eta$ is the identity map of $H^2(\pi_1(X), {_{F}}\bT(k_s) \big).$ In particular, $\eta$ is injective.
\medskip

{\bf Proof of Step 2}. 
 Just like $[\hE]$, the image $[E'] := \delta_\bN([E]) \in H^1\big(\pi_1(X),\bN(X^{sc})\big)$ also maps to $[F']$ under $\mu'.$ To take advantage of this situation we will twist our relevant exact sequence of $H^1$s by $E$ and $E'$ as we now explain.
 


 \medskip

In addition to the ``constant" action of $\pi_1(X)$ on $\bN(X^{sc})$ described above, there is also a twisted action by $E'$ of $\pi_1(X)$ on $\bN(X^{sc})$ which, following Serre we denote by ${{^\alpha}'}f$ for $\alpha \in \pi_1(X)$ and $f \in \bN(X^{sc}).$ It is defined as follows: $E'$ is a cocycle in $Z^1\big(\pi_1(X), \bN(X^{sc})\big)$ and as usual we denote $E'(\alpha)$ by $E'_\alpha.$ Being a cocycle means that 
$E'_{\alpha \beta} = E'_\alpha {^\alpha}E'_\beta$ for all $\alpha, \beta \in \pi_1(X).$ The twisted action is then given by ${{^\alpha}'}f = E'_\alpha ({^\alpha}f) E'_{\alpha^{-1}}.$ The group $\bN(X^{sc})$ viewed with this twisted action will be denoted by ${_{E'}}\bN(X^{sc}). $ Thus ${_{E'}}\bN(X^{sc})$ {\it is the group} $\bN(X^{sc})$ {\it with a different} $\pi_1(X)-$action.\footnote{\,There is a twisted $X-$group ${_{E'}}\bN$ defined by the contracted product $E' \wedge^\bN \bN_X.$ Ignoring $\pi_1(X)-$actions, the $X^{sc}$ points of this $X-$group is the (abstract) group $\bN(X^{sc}).$ Our notation is thus unambiguous.} 
The twisted action is also continuous and one then defines $H^1\big(\pi_1(X), {_{E'}}\bN(X^{sc})\big)$ in the standard fashion. 

Since $\bT$ is a normal subgroup of $\bN$ conjugation by an $E'_\alpha$ leaves $\bT(X^{sc})$ stable. This allows us to define on the group $\bT(X^{sc})$ a twisted action of $\pi_1(X)$ that is denoted by ${_{E'}}\bT(X^{sc}),$ and corresponding cohomology $H^1\big(\pi_1(X), {_{E'}}\bT(X^{sc})\big).$ 
Similarly considerations apply to $\bW$ and $F'.$

\medskip

All the above considerations apply to $\bT,$  $\bN$ and $\bW$ if we replace $E'$ by $E,$ $F'$ by $F,$ and $X^{sc}$ by $k_s.$ 

We thus get  a twisted version of the above diagram, namely


$$
\begin{CD}\label{H2sequence}
H^1\bigl( \pi_1(X),  {_E}\bT(k_s) \bigr) @>{\rho}_E>>
H^1\bigl( \pi_1(X), {_E}\bN(k_s)\bigr) @>{\mu}_E>>
H^1\bigl( \pi_1(X), {_F}\bW(k_s) \bigr)  \\
@V\delta_\bT^EVV @V{\delta_\bN^E}VV \mid \mid  \\
 H^1\bigl( \pi_1(X),{_{E'}}\bT(X^{sc})\bigr)
@>{\rho_{E'}}>>  H^1\bigl( \pi_1(X), {_{E'}}\bN(X^{sc})\bigr) @>{\mu_{E'}}>>
 H^1\bigl( \pi_1(X), {_{F'}}\bW(X^{sc}) \bigr) \, .\\
\end{CD}
$$
\begin{remark}
 As maps of sets, the maps of this diagram {\it are the same} as before: $\rho_E = \rho, \, \delta_\bT^E  = \delta_\bT, ...$ . The subscript $E$ is used to indicate that they are being applied to classes of cocycles with respect to the twisted action of $\pi_1(X).$ 
 \end{remark}

There exists a twisting bijection 
$$\tau_{E'} :  H^1\big(\pi_1(X), \bN(X^{sc})\big) \simeq H^1\big(\pi_1(X), {_{E'}}\bN(X^{sc})\big)
$$ 
defined as follows: let $[Y] \in H^1\big(\pi_1(X), \bN(X^{sc})\big),$ where 
$Y\in Z^1\big(\pi_1(X), \bN(X^{sc})\big)$ is a cocycle. We define $\tau_E(Y) := Y' : \pi_1(X) \to \bN(X^{sc})$ by $\alpha \mapsto Y'_\alpha := Y_\alpha {(E'_\alpha)}^{-1}.$ One checks that this is a cocycle with respect to the twisted action, namely $Y'_{\alpha \beta} = Y'_\alpha {{^\alpha}'}{Y'_\beta}.$ That $\tau_{E'}$ is a bijection is easy to show. Note that $\tau_{E'} :[E'] \mapsto [1].$\footnote{\, The nature of the twisting bijection $\tau_{E'} : H^1(X, \bN) \to H^1(X, {_{E'}}\bN)$ is given in \cite[Ch. III, \S4, 3.4]{DG}) (where the inverse of our map is described is terms of contracted products). We are describing explicitly how this bijection looks like, in term of cocycles,  when restricted to isotrivial torsors.}
Similarly, we have a twisting bijection $\tau_E  :  H^1\big(\pi_1(X), \bN(k_s)\big) \simeq H^1\big(\pi_1(X), {_E}\bN(k_s)\big).$ 
\medskip

Since $\mu_{E'}(\tau_{E'} ([\hE])) = 1,$ there exists a class $[Y'] \in  H^1(\pi_1(X), {_{E'}}\bT(X^{sc})\big)$ such that $\rho_{E'}([Y']) = \tau_{E'}([\hE]).$ We claim that if there exists $[Y] \in H^1\bigl( \pi_1(X),  {_E}\bT(k_s) \bigr)$ such that $\delta_\bT^E([Y]) = [Y'],$ then  Theorem \ref{t->l}(ii)  holds, that is, $\hE$ is an $\bN-$loop torsor. Indeed, we need to show that, up to equivalence, $\hE_\alpha \in \bN(k_s)$ for all $\alpha \in \pi_1(X).$ Let $[Z] = \rho_E([Y]).$ Then $[\hE] = {\tau_{E'}}^{-1}\big( \tau_{E'}(\hE)) = {\tau_{E'}}^{-1}\big( \delta_\bN([Z])\big)$. Since $Z_\alpha E_\alpha \in \bN(k_s),$ the claim holds.

We are thus reduced to showing that $[Y]$ as in the claim exists. By construction $[Y'] \in H^1(\pi_1(X), {_{E'}}\bT(X^{sc})\big).$ Recall that by assumption the $X-$torus ${_{\hE}}\bT$ is split by a Galois extension $X'$ of $X$ of degree $m$ not divisible by $p.$ But then the same is true for the $X-$torus 
$_{E'}\bT.$ Indeed for all $\alpha \in \pi_1(X),$ $E'_\alpha = \hE_\alpha t_\alpha$ for some (unique) element $t_\alpha \in \bT(X^{sc}).$ Thus, conjugating $\bT(X^{sc})$ by $E'_\alpha$ or $ \hE_\alpha$ is the same.

\medskip 

\begin{lemma} $m[Y'] = 0.$ 
\end{lemma}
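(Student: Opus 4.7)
The plan is to apply the standard restriction--corestriction identity in continuous profinite cohomology. Set $G = \pi_1(X,a)$ and let $H \subset G$ be the open subgroup corresponding to the cover $X' \to X$; since $X'/X$ is a Galois cover of degree $m$, the index of $H$ in $G$ is $m$. By Proposition \ref{discrete} the module ${_{E'}}\bT(X^{sc})$ is a discrete $G$-module, so the classical formula
$$\mathrm{cores}_H^G \circ \mathrm{res}_H^G = m \cdot \mathrm{id}$$
holds on $H^1\bigl(G, {_{E'}}\bT(X^{sc})\bigr)$. In particular $m[Y'] = \mathrm{cores}_H^G\bigl(\mathrm{res}_H^G([Y'])\bigr)$, and it suffices to prove that $\mathrm{res}_H^G([Y']) = 0$.

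To compute the restriction, I first identify the underlying torus. As observed in the paragraph just preceding the lemma, for each $\alpha \in \pi_1(X)$ the elements $E'_\alpha$ and $\hE_\alpha$ differ by an element $t_\alpha \in \bT(X^{sc})$, so they induce the same conjugation action on $\bT$; hence ${_{E'}}\bT \simeq {_{\hE}}\bT$ as $X$-tori. Combined with the isomorphism ${_{\hE}}\bT \simeq \gT'$ from Proposition \ref{redGN} and the hypothesis that $\gT'$ is split by $X'$, this yields ${_{E'}}\bT \times_X X' \simeq \mathbb{G}_{m,X'}^{\,r}$ for some $r$. Pulling back along the canonical $X'$-morphism $X^{sc} \to X'$ produces an $H$-equivariant identification
$${_{E'}}\bT(X^{sc}) \;\simeq\; \bigl(\mathcal{O}(X^{sc})^\times\bigr)^r.$$

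Consequently $H^1\bigl(H, {_{E'}}\bT(X^{sc})\bigr) \simeq H^1\bigl(H, \mathcal{O}(X^{sc})^\times\bigr)^r$. Since $X^{sc}$ is connected and simply connected (Remark \ref{scprop}), it serves as the simply connected cover of $X'$ as well; applying Propositions \ref{discrete} and \ref{discrete1} with $X'$ in place of $X$, combined with Hilbert~90 for Galois covers, identifies $H^1\bigl(H, \mathcal{O}(X^{sc})^\times\bigr)$ with $H^1_{iso}(X', \mathbb{G}_m) \subseteq \Pic(X')$, which vanishes by hypothesis. Therefore $\mathrm{res}_H^G([Y']) = 0$, and the restriction--corestriction identity gives $m[Y'] = 0$. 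The only point requiring minor care is the identification of $H^1\bigl(H, \mathcal{O}(X^{sc})^\times\bigr)$ with an isotrivial Picard group, but this is routine once $X^{sc}$ is recognized as the simply connected cover of $X'$.
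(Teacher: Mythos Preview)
Your proof is correct and follows essentially the same route as the paper: both arguments use the restriction--corestriction identity $\mathrm{cor}\circ\mathrm{res}=m$ for the open normal subgroup $\tilde\pi=\ker\bigl(\pi_1(X)\to\Gal(X'/X)\bigr)$, and both kill the restriction by observing that ${_{E'}}\bT$ splits over $X'$ so that $H^1\bigl(\tilde\pi,\,{_{E'}}\bT(X^{sc})\bigr)$ lands in $\Pic(X')^r=0$. The paper records this inclusion in one line, whereas you spell out the intermediate identification $H^1\bigl(H,\mathcal O(X^{sc})^\times\bigr)\simeq H^1_{\mathrm{iso}}(X',\mathbb G_m)\subset\Pic(X')$ via Propositions~\ref{discrete} and~\ref{discrete1}; this is the same content, just unpacked.
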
 
{\it Proof} (of the Lemma).
Let $\tilde{\pi}$ be the kernel of the canonical group homomorphism $\pi_1(X) \to \Gal(X'/X).$\footnote{\, Since $X'$ is connected we can view $X^{sc}$ as the simply connected cover of $X'$. Then $\tilde{\pi} = \pi_1(X', a).$ See \cite[ IX Rem.5.8] {SGA3}.}  We have the exact sequence of groups
$$H^1(\pi_1(X), {_{E'}}\bT(X^{sc})\big) \os{\rm res} \longrightarrow H^1(\tilde{\pi}, {_{E'}}\bT(X^{sc})\big) \os{\rm cor} \longrightarrow H^1(\pi_1(X), {_{E'}}\bT(X^{sc})\big)$$
with ${\rm cor} \circ {\rm res}$ being equal to scalar multiplication by $m.$ The Lemma will follow if we can show that $[Y']$ is killed by the restriction map to $X'.$ To see that this holds observe that the $X-$torus ${_{E'}}\bT$ is split by $X',$ hence ${({_{E'}\bT)}}_{X'}$ is the product of $n-$copies of $\bG_{m, X'}.$ But then since by assumption $X'$ has trivial Picard group, we have ${H^1\big(\tilde{\pi}, {_{E'}}\bT(X^{sc})\big) \subset H^1\big(X', {({_{E'}}\bT)}}_{X'}\big) =0.$  \qed

Assume that $X = \Spec(A)$ and let $X^{sc} = \Spec(A^{sc}).$ We make some relevant observations:

(i) Since $X$ is normal the finite Galois extensions $X_i = \Spec(A_i)$ used to define $X^{sc},$ namely $A^{sc} = \limind A_i,$ are integral domains. Thus $A^{sc}$ is an integral domain.

(ii) The $m-$th power map $e : { A^{sc}}^{^\times}
\to { A^{sc}}^{^\times}$ is surjective. Indeed, if $a \in {A^{sc}}^\times$ then $B = A^{sc}[t]/<t^{m} - a>$ is a finite \'etale extension of $A^{sc}$, hence trivial (disjoint union of a finite number of copies of $X^{sc}$). Thus $a \in {({A^{sc}}^{\times})}^{m}.$ 

(iii) Let $ K \subset { A^{sc}}^{^\times}$ be the kernel of $e.$ Then $K = \bmu_{m}(k_s)$ since $A^{sc}$ is an integral domain. 
\medskip

Recall that the $\pi_1(X)-$module ${_{_{E'}}\bT(X^{sc}})$ is the group $\bT(X^{sc})$ with the twisted action of $\pi_1(X).$ Since $\bT \times_k X^{sc}$ is split,  ${_{E'}}\bT(X^{sc}) = { A^{sc}}^{^\times} \times \cdots \times{ A^{sc}}^{^\times};$ the product of $n-$copies of ${ A^{sc}}^{^\times}$ where $n = \dim_k(\bT).$ We will denote this Galois module by $N.$ 

Let us denote also by $e$ the $m-$power map of $\bT(X^{sc})$ (the $m-$th power in each component). Since the twisted action of $\pi_1(X)$ on  $\bT(X^{sc})$ is by group automorphisms, we get an exact sequence of $\pi_1(X)-$modules
$$
1 \longrightarrow M \longrightarrow N \os{e}\longrightarrow  N \longrightarrow 1.
$$
where $M$ is the $\pi_1(X)-$module ${_{_{E'}}}{_m}\bT(X^{sc}).$

Passing to cohomology yields
$$
H^1\big(\pi_1(X), M \big)  \longrightarrow H^1\big(\pi_1(X), N\big) \os{e^*}\longrightarrow H^1\big(\pi_1(X), N\big).
$$
By the last Lemma $[Y']$ is in the kernel of $e^*,$  hence it comes from an element $ [Y] \in H^1\big(\pi_1(X), M\big).$  But $M = {\bmu_{m}(k_s)}^n$ and the group ${\bmu_{m}(k_s)}^n$ with the twisted action of $\pi_1(X)$ is precisely the $m-$torsion submodule of ${_E}\bT(k_s).$ Thus $[Y] \in H^1\big(\pi_1(X), {_{E}}\bT(k_s)\big)$ and $\delta_\bT^E([Y]) = [Y']$ as desired. This completes the proof of Step 2.

\medskip

\noindent
{\bf Proof of the Theorem \ref{t->l} (iii)}.  The argument is analogous to the one given above if we  replace the universal cover
$X^{sc} \to X$ by its universal $p'$-geometric 
cover  $X^{sc,p'-geo} \to X$.

\begin{remark}\label{need} The assumption on the degree of the Galois extension $X'/X$ cannot be removed. We give two examples below. In the first example $\tG=\bG=\bT$ is a torus and in the second one $\tG=\bG={\bf PGL}_p$ is semisimple. Throughout $X = \Spec(k[t^{\pm 1}])$.
\medskip

\noindent
{\bf Example 1}.
Assume ${\rm char}(k) =p$ and $k$ admits a Galois extension $\ell$ of degree $p$ whose Galois group we denote by $\Gamma.$ We fix a generator $\sigma\in \Gamma$. Let $\bT_k=\cR_{\ell/k}(\bG_m)/\bG_m$.

\smallskip

\noindent
{\bf Claim 1}:  $\bT\simeq \cR^{(1)}_{\ell/k}(\bG_m)$. Indeed, the map $\cR_{\ell/k}(\bG_m)\to \cR^{(1)}_{\ell/k}(\bG_m)$ given by $x\to x^{1-\sigma}$ is surjective by Hilbert's Theorem 90 and its kernel is $\bG_m$.

\smallskip

\noindent
{\bf Claim 2}: One has $H^1(X,\bT)= k^\times/N_{\ell/k}(k^\times) \oplus \Z/p\Z.$ To prove the claim we first observe
that $\cR^{(1)}_{\ell/k}(\bG_m)$ (and hence $\bT$) is split over $X_\ell={\rm Spec}(\ell[t^{\pm 1}]),$ and that
${\rm Pic}(X_\ell)=1$. It follows
$$
H^1(X,\bT)=H^1(X,\cR^{(1)}_{\ell/k}(\bG_m))=H^1(\Gamma,\cR^{(1)}_{\ell/k}(\bG_m)(X_l))
$$
and hence
$$
\begin{array}{ccc}
H^1(X, \bT) &=&  
H^1(\Gamma , \ell[t^{\pm 1}]^\times \otimes_\Z X(\cR^{(1)}_{\ell/k}(\bG_m))_{*} )   \\ \nonumber
&= & H^1\big(\Gamma , (\ell^\times \oplus \Z) \otimes_\Z X(\cR^{(1)}_{\ell/k}(\bG_m))_{*}\big)  \\ \nonumber
&= & H^1\big(\Gamma , k^\times \otimes_Z X(\cR^{(1)}_{\ell/k}(\bG_m))_{*}) \oplus 
H^1\big(\Gamma,  X(\cR^{(1)}_{\ell/k}(\bG_m))_{*}\big)  \\ \nonumber
&&  \enskip = k^\times/N_{\ell/k}(k^\times) \oplus \Z/p\Z
\end{array}
$$

We let $[E]$ be a generator of the second summand $ \Z/p\Z.$

\smallskip

\noindent
{\bf Claim 3}: $[E]$ is not a loop torsor. Indeed,
the exact sequence
$$
1\longrightarrow \bG_m\longrightarrow \cR_{\ell/k}(\bG_m)\longrightarrow \bT\longrightarrow 1
$$
gives rise to
$$
1=H^1(X, \cR_{\ell/k}(\bG_m))\longrightarrow H^1(X,\bT)\stackrel{\delta}{\longrightarrow} H^2(X,{\bf G_m})
={\rm Br}(X).
$$ 
Tracing through the construction of the connecting homomorphism $\delta$ one can easily see
that  $\delta([E])$  is the class of the cyclic algebra
$A=(\ell/k,t)$. If $K=k(t)$ and $v$ is the discrete valuation of $K$ related to the variable $t,$
one can see that $A_{K_v}$ has nontrivial residue. In particular that $[A]\not=1$.

On the other hand, assume that $[E]$ is loop. Then it comes from an element of
$H^1(\pi_1(X),\bT(k_s))$. Passing from $k$ to $\tilde{k}=k^{-p^{\infty}}$ we can assume without loss
of generality that $k$ and $k_s$ are perfect. This implies that the $p$-power map $k_s\stackrel{p}{\longrightarrow}
k_s$ is bijective, hence that $H^2\big(\pi_1(X),\bT(k_s)\big)$ has no elements of order $p$. 

Consider next the commutative diagram
$$
\begin{CD}
H^1\bigl( \pi_1(X),  \cR_{\ell/k}(\bG_m)(k_s) \bigr) @>>>
H^1\bigl( \pi_1(X), \bT(k_s)\bigr) @>{\delta'}>>
H^2\bigl( \pi_1(X), \bG_m(k_s) \bigr)  \\
@VVV @V{\mu}VV  @V{\rho}VV \\
H^1\big( X,\cR_{\ell/k}(\bG_m)\big)
@>>>  H^1( X, \bT) @>{\delta}>>
 H^2(X, \bG_m ) \,. \\
\end{CD}
$$
Let $\alpha\in H^1\big(\pi_1(X),\bT(k_s)\big)$ be such that $\mu(\alpha)=[E]$. Since $[E]$ has
order $p$ and since $H^1\big(\pi_1(X),\bT(k_s)\big)$ is a periodic group we may assume that
$\alpha$ has order $p$. It follows that $\rho(\delta'(\alpha))=0$. On the other hand,
$$
\rho(\delta'(\alpha))=\delta(\mu(\alpha))=\delta([E])=[A]\not=1.
$$
This contradiction completes the proof of Claim 3.

\medskip

\noindent
{\bf Example 2.} We keep the same notation as in Example 1. Let $\bG={\bf PGL}_p$.
We have a canonical embedding $\bT\hookrightarrow \bG$. Consider the above constructed 
class $[E]\in H^1(X,\bT)$ and as usual we denote its image in $H^1(X,\bG)$ by 
$[E_{\bG}]$. As above we have a connected map $$\delta:H^1(X,\bG)\to H^2(X,\bG_m)={\rm Br}(X).$$
The image $\delta([E_{\bG}])$ of $[E_{\bG}]$ coincides with $[A]\not=1$. Arguing as above we conclude
that $E_{\bG}$ is not a loop torsor.

\end{remark}

\section{The toral nature of  loop torsors for extensions of locally constant $k-$groups by reductive groups}

Throughout this section $k$ is a field, and $p$  its characteristic exponent. We let $k'$ be a finite Galois extension of $k.$ Its Galois group will be denoted by $\Gamma.$

\subsection{Semilinear considerations.}

Let $\mathcal{A}$ be a $k'-$algebra (not necessarily associative, ...). A {\it semilinear automorphism of} $\mathcal{A}$
is an automorphism $\sigma$ of the underlying abelian group structure of $\mathcal{A}$ such that for all $a,b \in 
\mathcal{A}$ and $\lambda \in k'$
\medskip

SL1: $\sigma(ab) = \sigma(a)\sigma(b)$, and

SL2: there exists $\ts \in \Gamma$ such that $\sigma(\lambda a) =  {^{\ts}\lambda} \,\sigma(a).$
\medskip

If $\mathcal{A}$ is commutative associative and unital we also require that $\sigma(1) = 1.$ In this case $\mathcal{A}$ contains a canonical copy of $k'.$ In view of SL2 we then have
\medskip

SL2$'$ : $\sigma(\lambda) = {^{\ts}\lambda}$ for all $\lambda \in k'.$
\medskip

The element $\ts$ of SL2 is unique and is called the {\it type of} $\sigma.$ It is clear that the set 
$\Aut_\Gamma(\mathcal{A})$ of semilinear automorphisms of $\mathcal{A}$ is a group under composition, and that the type of a product is the product of the types.

 Let $H$ be an (abstract) group. A $\Gamma-${\it semilinear action  of} $H$ {\it on} $\mathcal{A}$ is a group homomorphism $\phi: H \to  \Aut_\Gamma(\mathcal{A}).$ If $h \in H$, we denote the type of $\phi(h)$ by $\tilde{h}$; it is an element of $\Gamma.$ The map $h \to \tilde{h}$ is a group homomorphism 
 $\tilde{\phi} : H \to \Gamma.$ 
  We denote the kernel of $\tilde{\phi}$ by $\tilde{H}.$ Thus $h \in \tilde{H}$ if and only if the action of $\phi(h)$ in $\mathcal{A}$ is $k'-$linear or, what is equivalent, if $\phi(h)$ is an automorphism of the $k'-$algebra $\mathcal{A}.$

The concept of semilinearity extends to coalgebras. We define it explicitly for the case that matters to us. Let $\bG$ an algebraic group over $k'.$ We denote for convenience the coordinate ring $k'[\bG]$ of $\bG$ by $A.$ Then $A$ is endowed with a Hopf algebra structure whose comultiplication we denote by $\Delta.$ 
 
 Let $\sigma$ be a semilinear automorphism of the associative commutative unital $k'-$algebra $A.$ It is simple to verify that there is a unique  additive map 
  $\sigma \otimes \sigma : A \otimes_{k'} A \to A \otimes_{k' }A$ 
 satisfying $\sigma \otimes \sigma(a \otimes b) =\sigma(a) \otimes \sigma(b).$ Moreover, $\sigma \otimes \sigma$ is a semilinear automorphism of type $\ts$ of the $k'-$algebra $A \otimes_{k' } A.$ We say that $\sigma$ is a {\it semilinear automorphism of $A$ (as a Hopf-algebra)} if
 \medskip
 
 HSL:  $\Delta \circ \sigma  = \sigma \otimes \sigma \circ  \Delta.$
 \medskip
 
  We denote the group of such automorphisms by $\Aut_\Gamma(A).$ Let $H$ be a group. A $\Gamma-${\it semilinear action} $H$ {\it   by automorphisms of} $\bG$ is a group homomorphism $H \to  \Aut_\Gamma(A).$
  \begin{remark} If $H$ acts $\Gamma-$semilinearly as automorphism of an algebra $\mathcal{A}$, then the action is 
  by $k-$linear maps, so it is an action by automorphisms of $\mathcal{A}$ viewed as a $k-$algebra. This observation 
  does not work for  semilinear actions on our Hopf algebras $A:$ There is in general no natural $k-$Hopf algebra structure on $A.$ Condition HSL is the right assumption to use.
  \end{remark}
  
 \begin{remark}\label{te} Let $\sigma \in \Aut_\Gamma(A),$ and let $\ts$ be the type of $\sigma.$ Recall that $\sigma$ acts on $k' \subset A$ like $\ts$ (see SL2$'$). We extend this action naturally to the ring of dual $k'-$numbers $k'[\epsilon] = k[\epsilon]\otimes_k k',$ $\epsilon^2 = 0.$ Note that the natural maps $k'[\epsilon] \to k'$ and $k' \to k'[\epsilon]$ commute with the action of $\sigma.$
 \end{remark}

  \begin{lemma}\label{SLD} Let $k'/k$, $\Gamma$ and $\bG$ be as above. Let $H$ be a group. A $\Gamma-$semilinear action of $H$ by automorphisms of $\bG$ induces in a natural way a $\Gamma-$semilinear action of $H$ by automorphisms of  the Lie algebra $\gg$ of $\bG$.
  \end{lemma}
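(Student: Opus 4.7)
The plan is to realize the Lie algebra $\gg$ via dual numbers and transport the given action there. Concretely, identify $\gg$ with the set of $k'$-algebra homomorphisms $X \colon A \to k'[\epsilon]$ whose reduction modulo $\epsilon$ is the counit $\varepsilon \colon A \to k'$, so that $\gg = \ker\bigl(\bG(k'[\epsilon]) \to \bG(k')\bigr)$; the $k'$-vector space structure is the standard tangent-space structure, and the Lie bracket is the commutator extracted from $\bG(k'[\epsilon_1,\epsilon_2])$.

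Fix $h \in H$, set $\sigma = \phi(h)$, and let $\ts \in \Gamma$ be its type. By Remark \ref{te}, the action of $\ts$ on $k'$ extends canonically to a semilinear ring automorphism $\sigma_\epsilon$ of $k'[\epsilon]$ fixing $\epsilon$, which commutes with the projection $k'[\epsilon] \to k'$ and with the inclusion $k' \hookrightarrow k'[\epsilon]$. I would then define
\[
h \cdot X \;:=\; \sigma_\epsilon \circ X \circ \sigma^{-1}.
\]
The composite is a ring homomorphism, and since $\sigma^{-1}$, $X$, and $\sigma_\epsilon$ have types $\ts^{-1}$, $1$, and $\ts$ respectively, their composition is $k'$-linear, so $h \cdot X$ lies in $\Hom_{k'\text{-alg}}(A, k'[\epsilon])$. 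That its reduction mod $\epsilon$ equals $\varepsilon$ follows from the fact that any element of $\Aut_\Gamma(A)$ preserves the counit in the semilinear sense, namely $\varepsilon \circ \sigma = \ts \circ \varepsilon$; this is extracted from HSL by applying $\varepsilon \otimes \id_A$ to both sides of $\Delta \circ \sigma = (\sigma \otimes \sigma) \circ \Delta$ and invoking the counit axiom. A direct computation $h\cdot(\lambda X)(a) = \sigma_\epsilon\bigl(\lambda\,X(\sigma^{-1}a)\bigr) = \ts(\lambda)\,(h\cdot X)(a)$ shows the action is $\ts$-semilinear, while the functoriality $(hh')\cdot X = h \cdot (h'\cdot X)$ is immediate from $\sigma_\epsilon\,\tau_\epsilon = (\sigma\tau)_\epsilon$.

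The remaining substantive step, and the one where the Hopf-algebra assumption HSL is essential, is compatibility with the Lie bracket. The bracket on $\gg$ is built from the comultiplication $\Delta$ via the commutator in $\bG(k'[\epsilon_1,\epsilon_2])$ of $(X \otimes Y) \circ \Delta$ and $(Y \otimes X) \circ \Delta$. The identity $\Delta \circ \sigma = (\sigma \otimes \sigma) \circ \Delta$ translates, after extending to $\sigma_{\epsilon_1} \otimes \sigma_{\epsilon_2}$ acting on $k'[\epsilon_1] \otimes_{k'} k'[\epsilon_2] \cong k'[\epsilon_1,\epsilon_2]$, into the statement that $(X, Y) \mapsto (X \otimes Y) \circ \Delta$ is equivariant for the natural actions of $\sigma$, so that $L(h)[X,Y] = [L(h)X, L(h)Y]$, where $L(h)$ denotes $X \mapsto h \cdot X$. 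I expect this to be the main, although ultimately routine, obstacle: the diagram chase itself is immediate from HSL, but the semilinear bookkeeping in the double dual numbers $k'[\epsilon_1,\epsilon_2]$ is the delicate part that needs to be handled carefully.
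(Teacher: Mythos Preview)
Your proposal is correct and follows essentially the same approach as the paper: the paper works with the $\epsilon_\bG$-derivation $\delta$ (so that $X=\epsilon_\bG+\delta\epsilon$) and defines the action by ${}^\sigma\delta:=\ts\circ\delta\circ\sigma^{-1}$, which is exactly your $h\cdot X=\sigma_\epsilon\circ X\circ\sigma^{-1}$ read on the $\epsilon$-component. The only cosmetic difference is that for the bracket the paper uses the explicit formula $[\delta_1,\delta_2]=\mu_A\circ(\delta_1\otimes\delta_2-\delta_2\otimes\delta_1)\circ\Delta$, which makes the verification from HSL a one-line computation rather than a diagram chase in $k'[\epsilon_1,\epsilon_2]$.
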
 
  \begin{proof}
  
  We begin by recalling, not just for the proof of the Lemma but for future reference,  the definition of  $\gg$ in terms of dual numbers. Let $\epsilon_{\bG} : A = k'[\bG] \to k'$ denote the counit map. As $k'$-spaces we have $A = k' \oplus I_{\bG}$ where $I_{\bG}$ is the kernel of $\epsilon_{\bG}.$  
  
  An {\it $\epsilon_\bG-$derivation of $A$} is an element of $\Der_{k'}(A, k')$ where $k'$ is viewed as a $A$--module via the counit map $\epsilon_{\bG}.$ In other words, an $\epsilon_\bG-$derivation of $A$ is a $k'-$linear map $\delta : A \to k'$ satisfying 
  $\delta(ab) = \epsilon_{\bG}(a)\delta(b) + \epsilon_{\bG}(b)\delta(a)$
  for all $a,b \in A.$ 
    
  Consider the ring of dual $k'-$numbers $k'[\epsilon].$ The ring homomorphism $k'[\epsilon] \to k'$ given by $\epsilon \mapsto 0$ induces a group epimorphism $\bG(k'[\epsilon]) \to \bG(k').$ The kernel of this morphism is by definition $\gg.$ It consists of all elements of $\bG(k'[\epsilon])$ of the form
$$
a \mapsto \epsilon_{\bG}(a) + \delta(a)\epsilon 
$$
where $a \in A$ and $\delta$ is an  $\epsilon_\bG-$derivation of $A.$  
We denote by $\delta_x$ the $\epsilon_\bG-$derivation corresponding to  $x \in \gg,$ that is $x = \epsilon_{\bG} + \delta_x\epsilon.$ The $k'-$space structure is such that $\lambda x$ corresponds to $\lambda\delta_x.$ 

The bracket of $\epsilon_\bG-$derivations is  defined by
$$
[\delta_1, \delta_2] = \mu_A \circ (\delta_1 \otimes \delta_2 - \delta_2 \otimes \delta_1) \circ \Delta.
$$
where $\mu_A$ is the multiplication of the $k'-$algebra $A.$
In terms of groups, if $x_i = \epsilon_{\bG} + \delta_i\epsilon_i$, $i = 1,2$, then in $\bG(k'[\epsilon_1, \epsilon_2])$ we have
$$ x_1x_2x_1^{-1}x_2^{-1} =  \epsilon_{\bG} + [\delta_1, \delta_2]\epsilon_1\epsilon_2.$$

We now turn to the proof of the Lemma.  If $\delta$ is an $\epsilon_\bG-$derivation and $\sigma \in \Aut_\Gamma(A),$ it is straightforward to verify that ${^{\sigma}\delta} := \ts \circ \delta \circ \sigma^{-1}$ is an 
$\epsilon_\bG-$derivation. If $x \in \gg$ we let ${^{\sigma}x}$ be the element of $\gg$ that corresponds to ${^{\sigma}(\delta_x)}.$ Thus ${^{\sigma}x} =  \epsilon_\bG + {^{\sigma}(\delta_x)}.$

It is clear  that ${^{\sigma}(\delta_1 + \delta_2)} = {^{\sigma}\delta_1} + {^{\sigma}\delta_2},$ and that ${^{\sigma}[\delta_1, \delta_2]} = [{^{\sigma}\delta_1}, {^{\sigma}\delta_2}].$ Furthermore, if $\lambda \in k',$ $x \in \gg,$ we have 
  $$\big({^{\sigma}(\lambda \delta})\big)(a) = \ts\big(\lambda \delta(\sigma^{-1}(a))\big) = {^{\ts}\lambda}\, {^{\sigma}\delta}(a).$$ This translates, under our identification of $\gg$ with $\epsilon_\bG-$derivations of $A,$ into a semilinear action of $\Gamma$ by automorphism of $\gg.$
  
   We have thus shown that the elements of  $\Aut_\Gamma(A)$ can be naturally seen as elements of $\Aut_\Gamma(\gg).$ Since this process is visibly a group homomorphism, the Lemma follows. \end{proof}
   
   \begin{remark} A slightly different proof of this Lemma is given in \cite[Cor. 4.7]{GP0}. The above argument is more direct and sufficient for our purposes. It also makes some of the notation and argument in subsequent proofs easy to follow.
      \end{remark}

  Let $\phi$ be a $\Gamma-$semilinear action of $H$ (on an algebra or an algebraic group, as described above). We say that $\phi$ (or $H$ by abuse of language)  is {\it  super solvable}, or simply {\it of type} $\rm {(S)},$ if there exists a finite family of subgroups $H_s \supset H_{s - 1} \supset ... \supset H_1
\supset H_0 = 0$ of $H$  satisfying the following condition:
\medskip

{\rm (S)}   $\ker(\tilde{\phi}) = H_s = \tilde{H}.$ Furthermore, each $H_i$ is normal in $H$ and  the quotients $H_i /
H_{i-1}$ are cyclic.
\medskip

\begin{remark}\label{onS} Recall that $\tilde{H}$ is the subgroup of $H$ consisting of elements of trivial type. The condition $H_s = \tilde{H}$ then says that, in the case of algebras, the elements of $H_s$ act as $k'-$linear transformations. In other words, they act as automorphisms of the $k'-$algebra under consideration.
\end{remark}

\begin{remark}\label{sigmaT} Let $\sigma \in \Aut_\Gamma(A).$ Let $\bT$ be a $k'-$subgroup of $\bG.$ Then $k'[\bT] = A/I$ for some (unique) Hopf ideal $I$ of $A.$ View $\sigma$ as an automorphism of $\Spec(k').$ By base change we get a new $k'-$group ${{^\sigma}\bT}.$ 

By definition $k'[{^{\sigma}\bT}] = 
k'[\bT] \otimes_{k'} k' = (A/I) \otimes_{k'} k'$
where $k'$ is viewed as a ring extension of $k'$ via the ring (iso)morphism $\sigma : k' \to k';$ that is,  $(\lambda a + I) \otimes 1 = (a + I) \otimes {^{\tilde{\sigma}}\lambda}$ for all $a \in A$ and $\lambda \in k'.$ 

One easily verifies that $\sigma(I)$ is a Hopf ideal of $A,$ and that the map
$(A/I) \times k' \to A/\sigma(I)$ given by $(a +I, \lambda) \mapsto {^{\tilde{\sigma}}\lambda}\, \sigma(a) + \sigma(I)$ induces a $k'-$Hopf algebra isomorphism $k'[{^{\sigma}\bT}] \simeq A/\sigma(I).$ It follows that we can canonically identify ${^{\sigma}\bT}$ with a $k'-$subgroup of $\bG.$ Furthermore, $\sigma$ is a $k'-$group isomorphisms between $\bT$ and ${^\sigma}\bT.$ In particular,  if $\bT$ is a maximal torus of $\bG$ so is ${{^\sigma}\bT}.$

\end{remark}
  
  \subsection{Semilinear version of a theorem of Borel-Mostow and Winter}

If $\gg$ is a Lie algebra over a field extension $K$ of $k$ and $d \in \Der_K(\gg)$, then $d^p$ (viewed as a $K$-linear endomorphism of $\gg$) is, by Leibnitz rule, actually an element of $\Der_K(\gg).$
Following \cite{W} we say that $\gg$ is {\it w-restricted} if the $p$-th power of all inner derivations is inner:  if $x \in \gg,$ there exists $y \in \gg$ such that $(\ad_\gg \,x)^p = \ad_\gg(y).$\footnote{\, The terminology stands for {\it weakly restricted}. The concept of restricted Lie algebra over a field of positive characteristic is due to Jacobson (see \cite{Jac}). Restricted Lie algebras are $w-$restricted.}

\begin{theorem}\label{BMW} (Semilinear Borel-Mostow-Winter)

 Let $\gg$ be a finite dimensional $w-$restricted $k'$-Lie algebra. Let $\phi$ be a $\Gamma-$semilinear action of a group $H$ by automorphisms of $\gg.$ Assume that:
 \medskip
 
 {\rm (a)} $H$ is of type {\rm (S)}. 
 
 {\rm (b)} The elements of $H_s$ act semisimply on $\gg.$\footnote{\, Because $H_i \subset
H_s = \tilde{H}$ (see Remark \ref{onS}), the action of the elements of $H_i$  on $\gg$ is
$k'$--linear. The assumption is that this action is semisimple
as  a $k'$--linear endomorphisms of $\gg.$}
\medskip

\noindent Then there  exists a Cartan subalgebra  of $\gg$ which is stable under the action of $H$.
\end{theorem}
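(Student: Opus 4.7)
The plan is to prove the theorem by induction on the length $s$ of the super-solvable chain. The base case $(s=0)$ will be a pure Galois-descent statement, while the inductive step will reduce the situation from $(\gg, H)$ to the smaller pair $(\gg^{H_1}, H/H_1)$, where $H_1$ is the first (cyclic, $k'$-linear, semisimply-acting) term of the chain.

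\textbf{Base case} $(s=0)$: Here $\tilde H = 0$, so $\tilde\phi : H \hookrightarrow \Gamma$ is injective and $H$ is a finite subgroup of $\Gamma = \Gal(k'/k)$. Let $k_0 = (k')^H$, so $k'/k_0$ is Galois with group $H$. Because $H$ acts on $\gg$ by Lie-algebra automorphisms (condition SL1), the invariants $\gg^H$ form a $k_0$-Lie subalgebra, and Galois descent for vector spaces yields $\gg^H \otimes_{k_0} k' \cong \gg$; in particular $\gg^H$ is finite-dimensional over $k_0$, so it admits a Cartan subalgebra $\gh_0$. The scalar extension $\gh := \gh_0 \otimes_{k_0} k'$ is visibly $H$-stable, and it remains a Cartan of $\gg$ because nilpotency and self-normalization survive faithfully flat base change.

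\textbf{Inductive step} $(s\ge 1)$: Let $H_1$ be the first nontrivial term of the chain. By hypothesis (b) the cyclic group $H_1 \subseteq \tilde H$ acts $k'$-linearly and semisimply on $\gg$, so $\gg_1 := \gg^{H_1}$ is a $k'$-Lie subalgebra, and normality of $H_1$ in $H$ makes $\gg_1$ stable under the semilinear $H$-action. I would now apply the induction hypothesis to $(\gg_1, H/H_1)$ with the induced chain $0 = H_1/H_1 \subsetneq H_2/H_1 \subsetneq \cdots \subsetneq H_s/H_1 = \widetilde{H/H_1}$ of length $s-1$. Three verifications are required: (i) $\gg_1$ is still $w$-restricted---for $x \in \gg_1$, lift $(\ad x)^p = \ad y$ with $y \in \gg$, observe that the ambiguity in $y$ is a central element, and use the semisimple action of $H_1$ on $Z(\gg)$ to adjust $y$ into $\gg_1$; (ii) the induced chain has cyclic quotients normal in $H/H_1$; and (iii) elements of $\tilde H / H_1$ act semisimply on $\gg_1$, since the restriction of a semisimple endomorphism to an invariant subspace is semisimple. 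The induction then produces an $H$-stable Cartan subalgebra $\gh_1$ of $\gg_1$. To pass back to $\gg$, take $\gh := C_\gg(\gh_1)$: the classical result (Borel--Mostow, adapted to $w$-restricted Lie algebras by Winter) that the centralizer in $\gg$ of a Cartan of the semisimple-invariants is a Cartan of $\gg$ shows $\gh$ is a Cartan, and $H$-stability follows from $\sigma\bigl(C_\gg(\gh_1)\bigr) = C_\gg(\sigma(\gh_1)) = C_\gg(\gh_1)$ for every $\sigma \in H$ using the $H$-stability of $\gh_1$.

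\textbf{Main obstacle:} The principal technical difficulties are concentrated in the inductive step: first, verifying that $\gg^{H_1}$ inherits the $w$-restricted property (step (i)), which depends critically on the semisimplicity of $H_1$ on $Z(\gg)$ to carry out the central adjustment; and second, invoking the classical lifting lemma that produces a Cartan of $\gg$ as the centralizer of a Cartan of $\gg^{H_1}$, which likewise requires the semisimple $H_1$-action. Both rely essentially on hypothesis (b), whereas the remaining ingredients---the Galois descent in the base case and the bookkeeping with the chain $\{H_i\}$ under the quotient---are routine.
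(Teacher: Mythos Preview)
Your base case $(s=0)$ is correct and matches the paper's argument. The inductive step, however, contains a genuine gap.

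The step where you ``pass back to $\gg$'' by setting $\gh := C_\gg(\gh_1)$ and invoking a ``classical result (Borel--Mostow, adapted to $w$-restricted Lie algebras by Winter)'' does not work as stated. First, even in characteristic~$0$ the correct object is the Fitting null-component $\gg^0(\gh_1)$, not the centralizer; when $\gh_1$ is a non-abelian Cartan (which happens for solvable $\gg^{H_1}$) the centralizer $C_\gg(\gh_1)$ is strictly smaller than $\gh_1$ itself and cannot be self-normalizing. Second, and more seriously, in the $w$-restricted setting Winter does \emph{not} prove that $\gg^0(\gh_1)$ is a Cartan of $\gg$. What Winter shows (his Theorem~2) is only that $\gg^0(\gh_1)$ is \emph{solvable}. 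Producing an $H$-stable Cartan inside this solvable algebra, and then lifting it to a Cartan of $\gg$, requires further work.

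This is exactly why the paper does \emph{not} induct on $s$ but on $\dim\gg$. With that induction scheme one can apply the hypothesis to the strictly smaller algebra $\gg_2=\gg^0(\gh_1)$ when $\gg_2\neq\gg$; and when $\gg_2=\gg$ (so $\gg$ is already solvable) one passes to quotients by the last term of the derived series or by the centre, again lowering dimension. Winter's Theorem~5 is then used in the form ``if $\gh$ is a Cartan of $\gg^0(\gn)$ then $\gg^0(\gh)$ is a Cartan of $\gg$'', which is a different (and weaker) statement than the one you invoke. Your induction on $s$ cannot access these smaller-dimensional algebras, so the argument as written does not close.
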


\begin{proof} Observe for future reference  that if $\mathfrak{f}$ is a Lie subalgebra of $\gg$ that is stable under the action of $H$, then we have a natural induced semilinear action of $H$ by automorphisms of $\mathfrak{f}$ with the desired properties, namely (a) and (b) above.

 If $s = 0$ we can by mean of condition $(\rm{S})$ identify $H$
with a subgroup $\Gamma_0$ of $\Gamma$ via the type: $\Gamma_0 = \tilde{\phi}(H).$ Thus, if $h \in H$ and  $\phi(h) = \gamma,$  then 
\begin{equation}\label{SL2'}
 {^{h}}(\lambda x) := {^{\gamma}}(\lambda x) = {^{\gamma}\lambda}{^h}x
 \end{equation}
for all $x \in \gg, \lambda \in k'.$

Let $k_0 = k'^{\Gamma_0}.$ This yields a
$k'/k_0-$semilinear action of $\Gamma_0$ by automorphisms of $\gg.$ By Galois descent the
fixed point $\gg_0 := \gg^{\Gamma_0}$ is a Lie algebra over $k_0$ for which
the canonical map $\rho : \gg_0\otimes_{k_0} k'\simeq \gg$
is a $k'$--Lie algebra isomorphism. It is easy to see by Galois descent that $\gg_0$ is $w-$restricted (see also \cite{Jac} pg. 192). By  [W] \S6 
the $k_0-$Lie algebra $\gg_0$ admits a Cartan subalgebra $\gh_0$. Then $\rho(\gh_0 \otimes_{k_0} k')$ is a Cartan
subalgebra of $\gg$ which is $H$--stable as one can easily see from (\ref{SL2'}).

Assume henceforth that $s \geq 1.$ We will reason by induction on $ \ell = \dim(\gg).$ The result holds if $\ell = 0.$ Assume $\ell \geq 1$ and that the result holds for all $w-$restricted Lie algebras of dimension less than $\ell$ equipped with a semilinear action of $H$ satisfying conditions (a) and (b) of the Theorem.

If $\gn$ is a nilpotent subalgebra of $\gg$ we will denote by $\gg^0(\gn)$ the null-space of $\gn$ acting on $\gg$ via the adjoint representation, that is
$$\gg^0(\gn) = \{ y \in \gg :  \forall x \in \gn, \,\,(\ad \, x)^n (y) = 0 \,\, \text {\rm for $n$ large enough}\}.$$

Consider a generator $\theta$ of the cyclic group
$H_1.$ We may assume that $\theta \neq 1$ (we can delete all the $H_i$ for which $H_i = H_{i -1}$ and produce a new SLD with smaller $s$).  As we have already
observed, the action of $\theta$ on $\gg$ is $k'$--linear.   The fixed point $k'-$ Lie subalgebra $\gg_1 = \gg^\theta$ of $\gg$ is $w-$restricted [W] \S6, second Lemma following Cor. 4. Let $\gh_1$ be a Cartan subalgebra of $\gg_1$  stable under $H.$ Then $\gg_2 = \gg^0(\gh_1)$ is $w-$restricted \cite{W} 
Theorem 5. Moreover, $\gg_2$ is a solvable Lie algebra by \cite{W} Theorem 2. It is clear that $\gg_2$ is $H-$stable.

Case 1: $\gg_2 \neq \gg.$ By induction there is an $H-$ stable Cartan subalgebra $\gh_2$ of $\gg_2.$ Then $\gh := \gg^0(\gh_2)$ is $H-$ stable, and it is a Cartan subalgebra of $\gg$ by \cite{W} Theorem 5.

\medskip

Case 2: $\gg_2 = \gg.$ Let $\ga$ be the last non-zero term of the derived series of $\gg.$ It is an abelian ideal that is $H-$stable. Let $\overline{\gg} = \gg/\ga.$ Being a quotient of a $w-$restricted Lie algebra, $\overline{\gg}$  is $w-$restricted. Since $\ga$ is $H-$stable there is a naturally induced semilinear action of $H$ on $\overline{\gg}.$ Moreover, the canonical surjective Lie algebra morphism $^{-} :\gg \to \overline{\gg}$ is $H-$equivariant.  Since $\dim(\overline{\gg}) < \dim(\gg)$ there exists an $H-$stable Cartan subalgebra $\overline{\gb}$ of $\overline{\gg}.$ Let $\gb$ be the preimage of $\overline{\gb}$ in $\gg.$ It is $H-$stable, hence has an induced semilinear action.
\medskip

\noindent Claim:  {\it $\gb$ is $w-$restricted.}

\noindent Proof (of the claim). Let $x \in \gb$ and chose $y \in \gg$ such that $(\ad_\gg \,x)^p = \ad_\gg \,y.$ Then $[\overline{y} \, ,\, \overline{\gb}] \subset \overline{\gb}.$ Since $\gb$ is a Cartan subalgebra of $\overline{\gg}$ it follows that $\overline{y} \in \overline{\gb}$, hence that $y \in \gb.$ Thus $(\ad_\gb \,x)^p = \ad_\gb \,y.$
\medskip

Case 2(i): $\dim(\gb) < \dim(\gg). $ Let $\gh$ be an $H-$stable Cartan subalgebra $\gb.$ We claim that $\gh$ is a Cartan subalgebra of $\gg$.  By \cite{Bbk} Ch. 7 \S2 Cor. 4, $\overline{\gh}$ is a Cartan subalgebra of $\overline{\gb}.$ But $\overline{\gb}$ is its own Cartan subalgebra, so $\overline{\gh} = \overline{\gb}.$ We need to show that $\gh$ is self-normalized in $\gg.$ Assume $x \in \gg$ is such that $[x, \gh] \subset \gh.$ Then $[\overline{x}, \overline{\gh}] \subset \overline{\gh}.$ Since $\overline{\gh} = \overline{\gb}$ and $ \overline{\gb}$ is a Cartan subalgebra of  $\overline{\gg}$, we get that $\overline{x} \in \overline{\gb}.$ Then $x \in \gb$ and therefore $x \in \gh$ since $\gh$ is a Cartan subalgebra of $\gb.$

Case 2(ii): $\gb = \gg.$ This can only happen if $\overline{\gb}  = \overline{\gg},$ so $\overline{\gg}$ is nilpotent. Let $\gh$ be a Cartan subalgebra of $\gg$. As we saw $\overline{\gh}$ is a Cartan subalgebra of $\overline{\gg}$, hence equal to $\overline{\gg}$ since $\gg$ is nilpotent. It follows that $\gg = \gh  + \ga.$

View $\ga$ as an $\gh-$module. By Engel's Theorem there exists a non-zero element  $z \in \ga$ such $[x,z] = 0$ for all $x \in \gh.$ Since $\ga$ is abelian we see that the centre $\gz$ of $\gg$ is not trivial. Note that since $\gz$ is $H-$stable, $\gg/\gz$ and has an induced semilinear action of $H.$ By induction there exists an $H-$stable Cartan subalgebra $\overline{\gh}$ of $\gg/\gz.$ The preimage $\gh$ of $\overline{\gh}$ in $\gg$ is $H-$stable. It is also a Cartan subalgebra of $\gg$ by the reasoning of Case 2(i). 
  \end{proof}

\begin{remark} If $H = H_s$  the Theorem reduces
to the ``Main result (B)" of Borel and
Mostow \cite{BM} for $\gg$  (in characteristic $0$), and Theorem 6 of \cite{W} (in arbitrary characteristic).
\end{remark}
\subsection{An application to reductive $k-$groups}
We want to use the last Theorem to prove the existence of $H-$stable tori on reductive $k'-$groups. We maintain the previous notation. Thus, $\bG$ is an algebraic group over $k'.$ As before we denote its coordinate ring $k'[\bG]$ by $A,$ and the comultiplication of the Hopf algebra structure of $A$ by $\Delta.$

Let $\phi : H \to \Aut_\Gamma(A)$ be a $\Gamma-$semilinear action of $H$ on $\bG.$ If $\bT$ is a $k'-$subgroup of $\bG$ we say that $\bT$ is $H-${\it invariant}, or {\it stable under} $H$, if ${{^\sigma}\bT} = \bT$ for all $\sigma \in \phi(H).$
\begin{lemma}\label{points} Let $\bT$ be a maximal $k'-$torus of $\bG.$ Let $\sigma \in \Aut_\Gamma(A)$ and consider the maximal torus ${^\sigma{\bT}}$ of $\bG$ defined in Remark \ref{sigmaT}.
The map $x \mapsto {^\sigma{x}}:= \ts \circ x \circ \sigma^{-1}$  is a  group isomorphism between $\bT(k')$ and ${{^\sigma}\bT}(k').$ The analogous map applied to $\epsilon_\bT-$derivations yields an isomorphism between the Lie algebra of $\bT$ and that of ${{^\sigma}\bT}.$ 
\end{lemma}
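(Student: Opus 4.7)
The plan is to unwind the identifications recorded in Remark~\ref{sigmaT} and verify the three required properties--landing in ${^\sigma}\bT(k')$, preserving group multiplication, and being bijective--by direct bookkeeping with SL1, SL2 and the Hopf-algebra condition HSL. A $k'$-point of $\bT$ is a $k'$-algebra morphism $x : A \to k'$ that vanishes on $I$, while a $k'$-point of ${^\sigma}\bT$ is a $k'$-algebra morphism $A \to k'$ vanishing on $\sigma(I)$, so the core task is to check that $\ts \circ x \circ \sigma^{-1}$ satisfies the right vanishing, the right $k'$-linearity, and the right multiplicativity.

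First I would check well-definedness. The composite $\ts \circ x \circ \sigma^{-1}$ is manifestly multiplicative as a composition of ring homomorphisms, and $(\ts \circ x \circ \sigma^{-1})(\sigma(i)) = \ts(x(i)) = 0$ for every $i \in I$. The $k'$-linearity uses that $\sigma^{-1}$ is $\ts^{-1}$-semilinear: for $\lambda \in k'$ and $a \in A$,
$$
(\ts \circ x \circ \sigma^{-1})(\lambda a) = \ts\bigl(\ts^{-1}(\lambda)\, x(\sigma^{-1}(a))\bigr) = \lambda \cdot (\ts \circ x \circ \sigma^{-1})(a).
$$
Next, to show $x \mapsto {^\sigma}x$ is a group homomorphism I would use that multiplication of $k'$-points is $xy = \mu_{k'} \circ (x \otimes y) \circ \Delta$, combined with HSL in the form $\Delta \circ \sigma^{-1} = (\sigma^{-1} \otimes \sigma^{-1}) \circ \Delta$ and the ring-automorphism identity $\ts \circ \mu_{k'} = \mu_{k'} \circ (\ts \otimes \ts)$. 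Plugging these in term by term yields $\ts \circ (xy) \circ \sigma^{-1} = ({^\sigma}x)({^\sigma}y)$. Since $\sigma^{-1} \in \Aut_\Gamma(A)$ has type $\ts^{-1}$, applying the same recipe to $\sigma^{-1}$ provides a two-sided inverse and establishes bijectivity.

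For the Lie algebra statement I would reuse the identification from the proof of Lemma~\ref{SLD} of $\Lie(\bT)$ with the $\epsilon_\bG$-derivations $\delta : A \to k'$ that annihilate $I$. That proof already shows that $\delta \mapsto {^\sigma}\delta := \ts \circ \delta \circ \sigma^{-1}$ sends $\epsilon_\bG$-derivations to $\epsilon_\bG$-derivations and is compatible with the additive and bracket structures. The same vanishing argument as above gives ${^\sigma}\delta(\sigma(I)) = \ts(\delta(I)) = 0$, so ${^\sigma}\delta$ factors through $A/\sigma(I)$ and lands in $\Lie({^\sigma}\bT)$; applying the construction to $\sigma^{-1}$ supplies the inverse. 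The entire argument is essentially bookkeeping with the semilinear structures, and the only place where care is needed is in keeping the two $k'$-structures on $A/\sigma(I)$ straight: the standard one inherited from $A$, with respect to which our formula is $k'$-linear, and the twisted one coming from the identification $k'[\bT] \otimes_{k',\sigma} k' \simeq A/\sigma(I)$ of Remark~\ref{sigmaT}.
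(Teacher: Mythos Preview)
Your proposal is correct and follows essentially the same route as the paper: both arguments unwind the identification of Remark~\ref{sigmaT} and verify directly, using the semilinear properties SL1, SL2 (and HSL for you), that $\ts \circ x \circ \sigma^{-1}$ is a $k'$-algebra morphism killing $\sigma(I)$, and likewise for $\epsilon$-derivations. The only difference is that you spell out the group-homomorphism check via HSL, whereas the paper simply invokes the statement from Remark~\ref{sigmaT} that $\sigma$ is already a $k'$-group isomorphism $\bT \to {^\sigma}\bT$ and then just records what this isomorphism does on $k'$-points and on Lie algebras.
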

\begin{proof} We have seen in Remark \ref{sigmaT} that $\sigma$ is a $k'-$group  isomorphism between $\bT$ and  ${^\sigma}\bT.$ We look at the explicit nature of this isomorphism at the level of $k'-$points.
Let $x \in \bT(k') = \Hom(A, k').$ Note that $ \sigma^{-1}$ induces  a $k-$algebra isomorphism, also denoted by $\sigma^{-1},$ between $A/\sigma(I)$ and $A/I.$ Thus $f := x \circ \sigma^{-1} : A/\sigma(I) \to k'$ is a ring homomorphism satisfying 
 $f \big((\lambda {a + \sigma(I)\big) = {^{\ts^{-1}}\lambda}}f\big(a +\sigma(I)\big).$  Since the action of $\sigma$ on $k' \subset A$ is given by $\ts$ (see SL2$'$), we see that ${^\sigma{x}} \in  \Hom(A/\sigma(I), k') =  {{^\sigma}\bT}(k').$

Assume next that $\delta : A/I  \to k'$ is an $\epsilon_\bT-$derivation. One easily checks that ${{^\sigma}\delta} := \ts \circ \delta \circ \sigma^{-1} : A/\sigma(I) \to k'$ is  an $\epsilon_{{{^\sigma}\bT}}-$derivation (see Remark \ref{te}). The map $\epsilon_\bT + \delta \epsilon \mapsto \ts \circ (\epsilon_\bT + \delta  \epsilon) \circ \sigma^{-1} = \epsilon_\bT + {{^\sigma}\delta}\epsilon$ is the desired Lie algebra isomorphism. 
\end{proof}
\begin{proposition}\label{stabletori} Let $\bG$ be a reductive $k'-$group and $A = k'[\bG]$ its ring of regular functions. Let $\phi : H \to \Aut_\Gamma(A)$ be a $\Gamma-$semilinear action of $H$ on $\bG.$ Assume that $H$ is of type \rm{(\rm S)} and that under the induced action of $H$ on the Lie algebra $\gg$ of $\bG$ the elements of the kernel of $\phi$ act semisimply.  Then there exists a maximal torus $\bT$ of $\bG$ that is $H-$stable.
\end{proposition}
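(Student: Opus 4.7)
The plan is to apply the semilinear Borel--Mostow--Winter theorem (Theorem~\ref{BMW}) to the Lie algebra $\gg = \Lie(\bG)$ and then transfer the $H$-stable Cartan subalgebra it produces to a maximal torus of $\bG$. First, I would verify the hypotheses of Theorem~\ref{BMW} on $\gg$, endowed with the $\Gamma$-semilinear action of $H$ induced by Lemma~\ref{SLD}: finite dimensionality is clear since $\bG$ is algebraic, while $w$-restrictedness is standard for any smooth affine $k'$-group, as $\gg$ carries a canonical $p$-operation making it a restricted, hence $w$-restricted, Lie algebra. The conditions that $H$ be of type (S) and that the elements of $H_s$ act semisimply on $\gg$ are exactly the stated assumptions. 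Theorem~\ref{BMW} then produces a Cartan subalgebra $\gh \subset \gg$ such that ${^\sigma}\gh = \gh$ for every $\sigma \in \phi(H)$.

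Next, I would invoke the structural fact that for a reductive $k'$-group $\bG$, the map $\bT \mapsto \Lie(\bT)$ is a bijection between the set of maximal tori of $\bG$ and the set of Cartan subalgebras of $\gg$ (see \cite[XIII--XIV]{SGA3}); its inverse sends a Cartan subalgebra to the unique maximal torus with that Lie algebra, realized as the connected centralizer. Let $\bT$ be the maximal torus of $\bG$ corresponding to our $H$-stable Cartan subalgebra $\gh$.

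It then remains to show that $\bT$ is $H$-stable in the scheme-theoretic sense, i.e., ${^\sigma}\bT = \bT$ for every $\sigma \in \phi(H)$, where ${^\sigma}\bT$ is defined as in Remark~\ref{sigmaT}. For this I would fix such a $\sigma$ and use Lemma~\ref{points}, according to which
\[
\Lie({^\sigma}\bT) \;=\; \{\,{^\sigma}x : x \in \Lie(\bT)\,\} \;=\; {^\sigma}\gh \;=\; \gh \;=\; \Lie(\bT),
\]
the middle equality by the $H$-stability of $\gh$. By the bijective correspondence between maximal tori and Cartan subalgebras recalled above, ${^\sigma}\bT$ and $\bT$ must coincide. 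This produces the desired $H$-stable maximal torus and completes the argument.

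The main obstacle I anticipate is not the invocation of Theorem~\ref{BMW} itself but rather the compatibility of the three semilinear ``transports'': the $\sigma$-action on the Hopf algebra $A$ (Remark~\ref{sigmaT}), the induced action on derivations / Lie algebra elements (Lemma~\ref{SLD}), and the $\sigma$-action on $k'$-points of closed subgroups (Lemma~\ref{points}). One has to check, carefully unwinding these definitions, that $H$-stability of a Cartan subalgebra in the sense of Theorem~\ref{BMW} really corresponds to $H$-stability of the associated maximal torus in the sense required by the Proposition; Lemma~\ref{points} is precisely the bridge, and the argument above is essentially just the bookkeeping that makes this bridge explicit. A secondary point worth checking, depending on the generality desired, is the Cartan-subalgebra/maximal-torus bijection for reductive groups in positive characteristic, which I would handle by reduction to the algebraic closure together with uniqueness of the connected centralizer.
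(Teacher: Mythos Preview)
Your overall strategy---apply Theorem~\ref{BMW} to $\gg$ and lift the resulting $H$-stable Cartan subalgebra to a maximal torus---is exactly the paper's, but there is a genuine gap in the lifting step. The bijection $\bT \mapsto \Lie(\bT)$ between maximal tori of $\bG$ and Cartan subalgebras of $\gg$ that you invoke does \emph{not} hold for arbitrary reductive groups in positive characteristic; your proposed fix (pass to the algebraic closure and use connected centralizers) cannot rescue it, because the failure is already visible over algebraically closed fields. For instance, take $\bG=\SL_2$ in characteristic~$2$: the Lie algebra $\sl_2$ is then the Heisenberg algebra ($h$ becomes the identity matrix, which is central), hence nilpotent, so its unique Cartan subalgebra is $\sl_2$ itself---not the Lie algebra of any maximal torus. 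Thus the Cartan subalgebra produced by Theorem~\ref{BMW} need not be $\Lie(\bT)$ for any torus $\bT$, and even when it is, uniqueness of $\bT$ can fail.

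The paper circumvents this by a two-step reduction you are missing. First, it treats the case where $\bG$ is \emph{semisimple of adjoint type}: here \cite[XIV, Th.~3.9 and 3.18]{SGA3} do give the required correspondence (each Cartan subalgebra is the Lie algebra of a unique maximal torus), and your argument via Lemma~\ref{points} then goes through. Second, for general reductive $\bG$, the paper passes to the adjoint quotient $\bG_0=\bG/\bZ$; the centre $\bZ$ is $H$-stable, so $H$ acts semilinearly on $\bG_0$, and the adjoint case yields an $H$-stable maximal torus $\bT_0\subset\bG_0$. One then takes $\bT$ to be the unique maximal torus of $\bG$ mapping onto $\bT_0$ under the $H$-equivariant quotient map $q:\bG\to\bG_0$, and $H$-equivariance of $q$ forces $\bT$ to be $H$-stable. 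You should incorporate this reduction; without it the argument is incomplete in positive characteristic.
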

\begin{proof} We denote the Lie algebra of $\bG$ by $\gg.$
We claim that $\gg$ is a $w-$restricted Lie algebra over $k'.$ 
  By definition, $w-$restricted  means that  the derivation $(\ad_\gg \,x)^p$ of $\gg$ is inner for all $x \in \gg.$  Since $\gg$ is a restricted Lie algebra (\cite{DG} II \S7.3.4), $\gg$ has a $p$-map $x \mapsto x^{[p]}$ which satisfies $(\ad_\gg \,x)^p = \ad_\gg \,x^{[p]}.$
 
 By Theorem \ref{BMW} there exists a Cartan subalgebra $\gt$ of $\gg$ that is $H-$stable. 
 
Assume first that $\bG$ is semisimple of adjoint type. By \cite{SGA3} XIV Theo. 3.9 and 3.18 there exists a unique maximal torus $\bT$ of $\bG$ whose Lie algebra is $\gt.$ Because of the uniquenes, to show that $\bT$ is $H-$invariant it will suffice to show that for all $\sigma \in  \Aut_\Gamma(A)$ the Lie algebra $\gt_\sigma$ of ${^\sigma{\bT}}$ coincides with $\gt.$ But this is clear from Lemma \ref{points}. Indeed, if $y \in \gt_\sigma \subset \gg,$ then $ y = {^\sigma{x}}$ for some $x \in \gt.$  We have $[y, \gt] = [{^\sigma{x}}, \gt] = {^\sigma}[x,  {^{\sigma^{-1}}\gt}] = {^\sigma}[x, \gt] \subset  {^\sigma{\gt}}= \gt.$ Since since $\gt \subset \gg$ is self-normalized we conclude that $y \in \gt.$

 Assume next that $\bG$ is arbitrary. The centre $\bZ$ of $\bG$ is $H-$stable and we get an induced $H-$semilinear action of $H$ on $\bG/\bZ. = \bG_0.$ Since $\bG_0$ is of adjoint type there exists a maximal torus $\bT_0$ of $\bG_0$ that is $H-$stable. Let $\bT$ be the unique maximal torus of $\bG$ that maps to $\bT_0$ under the quotient map $q : \bG \to \bG_0.$ Since $q$ is $H-$equivariant we conclude that $\bT$ is $H-$stable.
\end{proof}

\subsection{An application to extensions of constant $k-$groups by reductive $k-$groups} \label{seckloopgroup}

As we have pointed out already, if $k$ is a field of characteristic zero the ring $R = k[t_1^{\pm 1}, \cdots, t_n^{\pm 1}]$ satisfies the following property: every finite \'etale covering $Y\to {\rm Spec}(R)$ has trivial Picard group.  This fact combined with a (characteristic 0) Proposition \ref{stabletori} is used in \cite{GP0} to show that loop reductive $R-$groups admit maximal tori. We will now establish the analogue of that result in our present setting for more general base schemes and groups.
\medskip

We maintain all the notation of \S 4. In particular we have our exact sequence (\ref{mainsequence}),
$$
1 \to \bG \to \tG \to \bJ \to 1.
$$

Let $X = \Spec(R)$ be an affine geometrically connected $k-$scheme admitting a rational point.  Let $[\tE] \in H^1(R,\tG)$ be a loop torsor, and consider the twisted $R-$group $_{\tE}\tG.$ Since loop torsors are isotrivial there exists a (finite) Galois extensions $R'$ of $R$  that trivializes $_{\tE}\tG,$ that is $_{\tE}\tG \times_R R' \simeq \tG \times_k R'.$ 
Since $\bG$ is a normal subgroup of $\tG$ we have a natural group homomorphisms $\tG(k_s) \to \bAut(\bG)(k_s)$ which is visibly $\pi_1(R)-$equivariant. This yields a map of pointed sets
\begin{equation}\label{map}
H^1\big(\pi_1(R), \tG(k_s)\big) \to H^1\big(\pi_1(R), \bAut(\bG)(k_s)\big).
\end{equation}
We denote by $[E] \in H^1\big(\pi_1(R), \bAut(\bG)(k_s)\big)$ the image of $[\tE]$ under this map, and consider the reductive $R-$group ${_E}\bG.$ We observe for future reference that $R'$ also trivializes the $R-$group ${_E}\bG.$


Let $u \in Z^1\big(\pi_1(R), \bAut(\bG)(k_s)\big)$ be a cocycle representing $[E].$ We know that $u$ factors through the Galois extension $R'$ of $R,$ whose Galois group we will denote by $\Gamma'.$ The image of $\Gamma'$ on $\Gal(k)$ under the map $p : \pi_1(R) \to \Gal(k)$ of (\ref{fundamentalexact}) corresponds to a finite Galois extension $k'$ of $k$ whose Galois group we denote by $\Gamma.$ 

Let us denote by $\bG'$ the $k'-$group $\bG_{k'} = \bG \times_k k'.$ The cocycle $u$ takes values in $\bAut(\bG)(k')$ and we will henceforth (anti)identify   $\bAut(\bG)(k')$ with the automorphisms of the $k'-$Hopf algebra $k'[\bG']$ of $\bG'.$   

We denote the kernel of the induced map $\overline{p} : \Gamma' \to \Gamma$ by $\overline{\Gamma}.$ For convenience if $\gamma' \in \Gamma'$ we denote $\overline{p}(\gamma')$ simply by $\gamma.$ This is the type of $\gamma'.$ The group $\Gamma'$ will play the role of the group $H$ of Theorem \ref{BMW} as we now explain.

Recall that $\bG' = \bG \times_k k'$ and that $k'[\bG'] = k[\bG] \otimes_k k'.$ Consider for each $\gamma' \in \Gamma'$ the map $\phi(\gamma') : k'[\bG'] \to k'[\bG']$ defined by
$$
\phi(\gamma') =  u_{\gamma'} \circ (\id \otimes\gamma).
$$
Since each $u_{\gamma'}$ is an automorphism of the $k'$--Hopf algebra $k'[\bG'],$ it follows that via $\phi$ we obtain an action of $\Gamma'$ on $k'[\bG']$ by $\Gamma-$semilinear Hopf algebra automorphisms. We thus get an induced $\Gamma-$semilinear action of $\Gamma'$ on the $k'$-Lie algebra $\gg'$ of $\bG'.$

\begin{theorem}\label{existenceoftori} Let $X = \Spec(R)$ be an affine  geometrically connected noetherian $k-$scheme such that $X(k) \neq \emptyset.$ Let $[\tE] \in H^1(X, \tG)$ be a loop torsor and $\Gamma'$ the Galois group of  a Galois extension $R'$ of $R$ that trivializes $[\tE],[E]$ and $u$. Assume that:

(i) The group $\Gamma'$ is of type {\rm (S)}.

(ii) The elements of $\overline{\Gamma}$ (the kernel of the type map) act on $\gg'$ as semisimple $k'$-linear transformations.\footnote{\,  The elements of $\overline{\Gamma}$ act as elements of finite order, hence semisimply in  characteristic $0$.}

\noindent Then the $R-$group  $_{\tE}\tG$ admits a maximal torus that splits over $R'\otimes_k k_s$.
\end{theorem}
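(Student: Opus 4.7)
The plan is to apply Proposition \ref{stabletori} to the $\Gamma$-semilinear action $\phi$ of $\Gamma'$ on $\bG'=\bG_{k'}$ in order to produce a $\Gamma'$-stable maximal $k'$-torus $\bT$ of $\bG'$, and then to descend $\bT_{R'}$ along the Galois cover $R'/R$ to obtain a maximal torus of ${_{\tE}}\tG$ over $R$ which splits over $R'\otimes_k k_s$.

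First, the two hypotheses of Proposition \ref{stabletori} are precisely hypotheses (i) and (ii) of the theorem: (i) asserts that $\Gamma'$ is of type (S) with respect to $\tilde\phi$, and (ii) asserts that the kernel $\overline{\Gamma}=\ker(\tilde\phi)$ acts semisimply on the Lie algebra $\gg'$ via the induced action of Lemma \ref{SLD}. Proposition \ref{stabletori} then yields a maximal torus $\bT$ of $\bG'$ such that $u_{\gamma'}\cdot{^{\gamma}}\bT=\bT$ for every $\gamma'\in\Gamma'$, where $\gamma$ is the type of $\gamma'$ and ${^{\gamma}}\bT$ is as in Remark \ref{sigmaT}.

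Next, I translate this semilinear stability over $k'$ into ordinary (twisted) Galois stability over $R'$. By construction the extension $k'/k$ corresponds to the image of $\Gamma'$ in $\Gal(k_s/k)$ under $p$, so the inclusion $k'\hookrightarrow R'$ is $\Gamma'$-equivariant with respect to the projection $\Gamma'\twoheadrightarrow\Gamma$. Consequently, for $\gamma'\in\Gamma'$ of type $\gamma$, base change along $\gamma':R'\to R'$ satisfies $(\id\otimes\gamma')(\bT_{R'})=({^{\gamma}}\bT)_{R'}$. The twisted $\Gamma'$-action on $\bG_{R'}$ realizing the descent to ${_E}\bG={_{\tE}}\bG$ is given by $\gamma'\cdot(-)=u_{\gamma'}\circ(\id\otimes\gamma')$, so combining the previous identity with $u_{\gamma'}({^\gamma}\bT)=\bT$ gives $\gamma'\cdot\bT_{R'}=\bT_{R'}$. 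Galois descent along $R'/R$ then yields a closed $R$-subgroup $\mathfrak{T}\subset{_{\tE}}\bG$ with $\mathfrak{T}_{R'}\cong\bT_{R'}$.

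Finally, $\mathfrak{T}$ is a torus by fppf descent of the torus property, and its fibres are maximal tori of those of ${_{\tE}}\bG$ because $\bT_{\kappa(y)}$ is a maximal torus of $\bG_{\kappa(y)}$ for every $y\in\Spec(R')$. By the discussion preceding Theorem \ref{t->l}, which still applies to ${_{\tE}}\tG$ with connected component ${_{\tE}}\bG$, the maximal torus $\mathfrak{T}$ of ${_{\tE}}\bG$ is also a maximal torus of ${_{\tE}}\tG$. For the splitting assertion, $\mathfrak{T}_{R'\otimes_k k_s}\cong\bT\otimes_{k'}(R'\otimes_k k_s)$; using the decomposition $k'\otimes_k k_s\cong\prod_{\gamma\in\Gamma}k_s$ presents this base change as a product of tori of the form ${^\gamma}\bT\otimes_{k'}(R'\otimes_{k'}k_s)$, each of which is split since every $k'$-torus (and every twist thereof) is split by $k_s$. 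The main technical step is the middle paragraph: matching the $\Gamma$-semilinear stability condition on $\bT$ over $k'$ produced by Proposition \ref{stabletori} with the twisted $\Gamma'$-Galois stability condition on $\bT_{R'}$ needed to descend along $R'/R$, which rests on the $\Gamma'$-equivariance of $k'\hookrightarrow R'$.
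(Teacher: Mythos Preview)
Your proof is correct and follows essentially the same approach as the paper: invoke Proposition \ref{stabletori} to obtain a $\Gamma'$-stable maximal torus of $\bG'$, then descend its base change to $R'$ along the Galois cover $R'/R$ using the compatibility of the semilinear $\Gamma'$-action on $k'[\bG']$ with the twisted $\Gamma'$-action on $R'[\gG]$. Your middle paragraph is exactly the content of the paper's Lemma \ref{??} (phrased group-theoretically rather than in terms of Hopf ideals), and your splitting argument via $k'\otimes_k k_s\cong\prod_{\gamma\in\Gamma}k_s$ is a slightly more explicit version of the paper's one-line remark that $\bT'$ splits over $k_s$.
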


\begin{proof}  

The $R-$group ${_{\tE}}\tG$ is a twisted from of $\tG$, hence locally of finite presentation. The concept of maximal tori of $\tG$ is defined in \S\ref{tt}. The $R-$group ${_E}\bG$ defined via (\ref{map}) is precisely the group $_{\tE}\bG$.
It follows that to establish the theorem it will suffice to show that the reductive $R-$group ${_E}\bG$ admits a maximal torus.

Consider the reductive $R'$-group $\gG =  \bG \times_k R'.$ The coordinate ring of the $R-$group ${_{E}}\bG$ is obtained from that of $R'[\gG]$ by Galois descent, namely
$$
R[{_{E}}\bG] = \{x \in R'[\gG] : u_{\gamma'} \, ^{\gamma'}x = x  \, \,\forall\, \gamma' \in \Gamma' \}.
$$
Note that $R'[\gG] = k[\bG] \otimes_k R'$ and that, under this identification,  the action of $\Gamma'$ on $R'[\gG]$ is given by 
\begin{equation}\label{R[G]s}
^{\gamma'}(a \otimes s) =  u_{\gamma'}(a \otimes 1) (1 \otimes {^{\gamma'}}s)
\end{equation}
for all $ \gamma' \in \Gamma',$  $a \in k[\bG]$ and $ s \in R'.$ The right-hand-side of (\ref{R[G]s})  is the product of two elements  of $k[\bG] \otimes_k R',$ and the equality holds because, since $u_{\gamma'}$ is $R'-$linear, 
$u_{\gamma'}(a \otimes {^{\gamma'}}s) =  u_{\gamma'}\big((a \otimes 1) \otimes (1 \otimes {^{\gamma'}}s)\big) =  u_{\gamma'}(a \otimes 1)(1 \otimes {^{\gamma'}}s).$ 

 As we shall see, it is necessary for us to understand this action  in terms of the coordinate ring $k'[\bG'] = k[\bG] \otimes_k \, k'$ of the $k'-$group $\bG',$ that is, under the identification
\begin{equation}\label{?} 
k[\bG] \otimes_k R' \simeq  k[\bG] \otimes_k k' \otimes_{k'} R' =  k'[\bG'] \otimes_{k'} R'.
\end{equation} 
\begin{lemma}\label{??} Under the identification {\rm (\ref{?})} the action {\rm (\ref{R[G]s})} is given by
$$^{\gamma'}(a' \otimes s) =  \phi(\gamma')(a')  \otimes {^{\gamma'}}s$$
for all $a' \in k'[\bG']$ and $s \in R'.$
\end{lemma}
\noindent{\it Proof} (of the Lemma). Since $k'[\bG'] = k[\bG] \otimes_k \, k'$ the element $a'$ is a sum of elements of the form $a \otimes z $  with $a \in k[\bG]$ and $z \in k'.$ 
By definition $\phi(\gamma')(x \otimes z) = u_{\gamma'}(a \otimes {^{\gamma}}z) = u_{\gamma'}(a \otimes 1) (1 \otimes {^{\gamma}}z).$ On the other hand if we view $z$ as an element of $R',$ then ${^{\gamma'}}z = {^{\gamma}}z$ so that ${^{\gamma}}z  {^{\gamma'}}s = {^{\gamma'}}z  {^{\gamma'}}s = {^{\gamma'}}(zs).$ The Lemma follows.\qed
\medskip

By Proposition \ref{stabletori} there exists a maximal torus $\bT'$ of $\bG'$ that is stable under the action of $\Gamma'.$ The coordinate ring $k'[\bT']$ of $\bT'$ is of the form $k'[\bG']/I$ for some Hopf ideal $I$ of $k'[\bG']$ that is stable under the action of $\Gamma'$ (see Remark \ref{sigmaT}). It follows that the reductive $R'$-group $\bG' \times_{k'} R'$ admits the maximal torus   $\gT = \bT' \times_{k'} R'.$ But since  
$\bG' \times_{k'} R' = \bG \times_k k' \times_{k'} R' = \bG  \times_k R' = \gG,$ we see that $\gT$ is a maximal torus of $\gG.$  We show that ${_{E}}\bG$ has a maximal torus by showing that $\gT$ descends, namely it is stable under the action of $\Gamma'.$ More precisely, $\gT$ corresponds to the ideal $I \otimes_{k'} R'$ of the $R'-$Hopf algebra $k'[\bG'] \otimes_{k'} R',$ and we need to show that this ideal is stable under the action of $\Gamma'.$ That this is the case follows from Lemma \ref{??} since $I$ is stable under the semilinear action of $\Gamma'.$ It remains to note
that since $\bT'$ is split over $k_s$ the constructed maximal torus in $_E\bG$ is split over 
$R'\otimes_k k_s$. This completes the proof of the theorem.
\end{proof}
\begin{remark}\label{semisimplicity} If $\overline{\Gamma}$ is a finite group of order prime to $p$
then its elements automatically act  on $\gg'$ as semisimple $k'$-linear transformations.
\end{remark}


\section{The Laurent polynomials case}

In this section we consider the ring $R_n=k[t_1^{\pm 1}, \dots, t_n^{\pm 1}]$ for $n \geq 1$.  
We remind the reader that the $p'-$geometric simply connected cover  $R_n^{p'-geo}$ of $R_n$
is the inductive limit of the $l[t_1^{\pm 1/m}, \dots, t_n^{\pm 1/m}]$
for $l/k$ running over the finite Galois 
subextensions of $k_s$ and
$m$ coprime to $p$ such that $\mu_m(l)= \mu_m(k_s)$\cite[Remark 4.2]{G}. 
Such an extension is Galois and 
$\Gal( l[t_1^{\pm 1/m}, \dots, t_n^{\pm 1/m}]/R_n)=
\mu_m(l)^n \rtimes \Gal(l/k)$. 

\begin{theorem}\label{thm_laurent} Let $1 \to \bG \to \widetilde \bG \to \bJ \to 1$ be an exact 
sequence of $k$-groups where $\bJ$ is twisted constant and $\bG$ is reductive. Then.

\smallskip

\noindent 
{\rm (i)} Let $\widetilde E$ be a $p'-$geometric loop $\widetilde \bG$-torsor over $R_n$.
Then the twisted $R_n$-group scheme
$_{\widetilde E}\widetilde \bG$ admits a maximal torus 
which is split by a $p'-$geometric Galois cover of $R_n$.

\smallskip

\noindent  
{\rm (ii)}  
Let $\widetilde E$ be a toral $\widetilde \bG$-torsor over $R_n$
which is split by $R_n^{p'-geo}$.  Assume that
the twisted $R_n$-group scheme 
 $_{\widetilde E}\widetilde \bG$ admits a maximal torus 
which  is split by a finite Galois $p'$-extension of
$R_n$. Then $\widetilde E$ is a  $p'-$geometric loop torsor.
\end{theorem}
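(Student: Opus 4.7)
Both parts reduce to the main theorems of the previous sections applied to $X = \Spec(R_n)$, which is affine, integral, normal, and has the rational point $(1,\ldots,1)$. The key tool is the explicit description of $R_n^{p'-geo}$: it is the direct limit of the standard Galois covers $\Spec(l[t_1^{\pm 1/m},\ldots,t_n^{\pm 1/m}])$ with Galois group $\Gamma' = \mu_m(l)^n \rtimes \Gal(l/k)$, where $l/k$ is finite Galois with $\mu_m(l) = \mu_m(k_s)$ and $m$ coprime to $p$; all such standard covers have trivial Picard group.

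For part (i), the plan is to apply Theorem \ref{existenceoftori}. Because $\widetilde E$ is a $p'$-geometric loop torsor and $\tG$ is locally of finite presentation (Lemma \ref{localg}), a limit argument reduces to the case that $\widetilde E$ is trivialized by a finite standard cover $R' = l[t_i^{\pm 1/m}]$. The kernel of the type map $\Gamma' \to \Gal(l/k)$ is $\overline{\Gamma} = \mu_m(l)^n$, of order $m^n$ coprime to $p$. The subgroup chain
$$
0 \;\subset\; \mu_m \;\subset\; \mu_m^2 \;\subset\; \cdots \;\subset\; \mu_m^n \;=\; \overline{\Gamma}
$$
exhibits $\Gamma'$ as being of type (S): each $\mu_m^i$ is normal in $\Gamma'$ since $\overline{\Gamma}$ is abelian and $\Gal(l/k)$ acts on $\mu_m^n$ componentwise (through the cyclotomic character on each factor), with cyclic successive quotients $\mu_m$. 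Semisimplicity of the $\overline{\Gamma}$-action on $\gg'$ is immediate from Remark \ref{semisimplicity} because $|\overline{\Gamma}|$ is coprime to $p$. Theorem \ref{existenceoftori} then produces a maximal torus of $_{\widetilde E}\tG$ split over $R' \otimes_k k_s$, and since the character lattice is finite rank, this splitting factors through a finite level of $R_n^{p'-geo}$, giving a $p'$-geometric Galois splitting cover as required.

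For part (ii), the plan is to apply Theorem \ref{t->l}(iii) to $X$, the toral torsor $\widetilde E$, and its fixed maximal torus $\gT'$. For the cover $Y$ of Theorem \ref{t->l}: since $\widetilde E$ is split by $R_n^{p'-geo}$, I may take $Y = \Spec(l[t_i^{\pm 1/m}])$ of the standard form, enlarged if necessary to also split the fixed maximal torus $\bT \subset \bG$; this $Y$ is $p'$-geometric Galois, has trivial Picard group, splits $\bT_Y$, and trivializes $\widetilde E$. For the cover $X'$ splitting $\gT'$: by hypothesis the $\pi_1(R_n)$-action on $X_*(\gT')$ factors through a finite $p'$-group, and using Abhyankar's lemma on the tame fundamental group of $R_n$, the given splitting extension can be dominated by a standard cover $X' = \Spec(l'[t_i^{\pm 1/m'}])$ chosen so that the total degree $m'^n \cdot [l':k]$ is coprime to $p$; such $X'$ has trivial Picard group and is of $p'$-degree. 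Theorem \ref{t->l}(iii) then concludes that $\widetilde E$ is a $p'$-geometric loop torsor.

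The main obstacle is the construction of $X'$ in part (ii). A naive standard cover $\Spec(l'[t_i^{\pm 1/m'}])$ always has trivial Picard group, but requires $l' \supset k(\mu_{m'})$, which may force a $p$-part into $[l':k]$ and hence into the total degree. The $p'$-degree hypothesis on the given splitting extension of $\gT'$ is precisely what ensures that compatible choices of $l'$ and $m'$ can be made with both $m'$ coprime to $p$ and $[l':k]$ coprime to $p$; controlling this arithmetic compatibility, via the structure of the $p'$-quotient of $\pi_1(R_n)$, is the subtle point of the argument.
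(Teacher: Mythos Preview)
Your proposal follows essentially the same route as the paper: part (i) is Theorem \ref{existenceoftori} applied to a standard cover $R'=l[t_i^{\pm 1/m}]$ with $\Gamma'=\mu_m(l)^n\rtimes\Gal(l/k)$ (type (S), kernel a $p'$-group, hence Remark \ref{semisimplicity} gives semisimplicity), and part (ii) is Theorem \ref{t->l}(iii) with $Y$ a standard cover trivializing both $\tE$ and $\bT$. This is exactly what the paper does, though more tersely.

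The one divergence is your ``main obstacle'' paragraph. The paper does not go through any Abhyankar-type domination argument to manufacture $X'$; it simply takes the $p'$-Galois extension that is given in the hypothesis as $X'$ and asserts that all assumptions of Theorem \ref{t->l}(iii) are satisfied. Your concern that a dominating \emph{standard} cover might acquire a $p$-part in $[l':k]$ is real, but it is also beside the point: Theorem \ref{t->l}(iii) does not ask that $X'$ be a standard cover, only that it be Galois of $p'$-degree with trivial Picard group. The $p'$-degree is given by hypothesis; what is actually needed is the Picard vanishing for the given cover, not for some standard enlargement of it. So your proposed workaround solves the wrong problem, and the genuine verification you should supply (which the paper leaves implicit) is that a finite Galois $p'$-cover of $R_n$ has trivial Picard group --- this is where the structure of tame covers of $R_n$ enters, not in dominating by a standard cover of $p'$-degree.
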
 


\begin{proof}(i) 
Our assumption 
implies that the $\widetilde \bG$-torsor
$\widetilde E$ 
is split by a Galois  extension $R'/R_n$  whose Galois group is of the form
$\Gamma= \Gal(R'/R)=
 {\mu_m(l)^n \rtimes \Gal(l/k)}$.
Clearly, $\Gamma$ is of type (S) and the kernel of the  type map is $\mu_m(l)^n,$
 a finite commutative $p'$--group. Hence the assertion follows from 
Theorem \ref{existenceoftori} and Remark~\ref{semisimplicity}.

\smallskip

\noindent (ii) The strategy is to apply Theorem \ref{t->l} (iii).
 Let $\bT$ be a maximal $k$--torus of $\widetilde \bG$ and 
 $ \bN= N_{\widetilde \bG}(\bT)$. There exists a Galois cover  $R'/R$ as above
 which splits the $\widetilde \bG$--torsor $\widetilde E$ 
and which splits the $k$--torus $\bT$.
We have $\Pic(R')=0$.
Since all assumptions of  Theorem \ref{t->l} (iii) are
satisfied, we conclude that 
  $\widetilde E$ is a  $p'-$geometric loop torsor.
\end{proof}

To summarize, we have the following implications
for a $\widetilde \bG$-torsor $\tE$ that is split by a 
 $p'$-geometric Galois cover of $R_n$

\[
\xymatrix{
\fbox{$\widetilde{E}\mbox{\ \ is\ } p'-\mbox{geometric loop}$}
\ar@{=>}[rr]
& &
\fbox{$\txt{$\,_{\tE\,}\bG$ admits a maximal torus\\ 
that is split by a\ $p'-$geometric Galois cover}$}
\\
&  
\makebox[75pt]{\fbox{$\txt{$\,_{\tE\,}\bG$ admits a maximal torus\\ 
that is split by a $p'$-Galois cover}$}} 
\ar@{=>}[ur] \ar@{=>}[ul] &
}
\]

\begin{remark} Note that the converse of the top implication fails (see the counterexamples in Remark~\ref{need}).
\end{remark}

If the Galois group of $k$ is a $p'$-profinite group,
we have a necessary and sufficient condition 
for characterizing $p'$-loop torsors.
\begin{corollary} Under the assumptions of 
Theorem \ref{thm_laurent}, assume furthermore that
$\Gal(k_s/k)$ is a profinite $p'$--group.
Let $\widetilde E$ be a $\widetilde \bG$-torsor over $R_n$
which is split by $R_n^{p'-geo}$. Then the following 
assertions are equivalent:

\smallskip

{\rm (a)} $\widetilde E$ is a  $p'$-loop torsor.

\smallskip

{\rm (b)}  The twisted $R_n$-group scheme 
 $_{\widetilde E}\widetilde{\bG}$ admits a maximal torus 
which  is split by a finite Galois $p'$-extension of
$R_n$. 
\end{corollary}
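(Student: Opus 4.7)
The plan is to reduce the equivalence to a direct application of Theorem~\ref{thm_laurent}(i) and (ii), by showing that under the hypothesis on $\Gal(k_s/k)$, the notions of $p'$-loop and $p'$-geometric loop torsor coincide. The hypothesis that $\Gal(k_s/k)$ is a profinite $p'$-group is equivalent to $k^{(p')}=k_s$.

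First, I would analyze the fundamental group. In the commutative diagram of exact sequences recalled in the discussion of variations of the fundamental group, the middle row reads
\[
1 \to \pi_1(X_{k_s}, a)^{(p')} \to \pi_1(X,a)^{(p'-\text{geo})} \to \Gal(k_s/k) \to 1.
\]
Under our assumption, both the kernel and the quotient are profinite $p'$-groups, hence $\pi_1(X,a)^{(p'-\text{geo})}$ is itself a profinite $p'$-group. Consequently it coincides with its own maximal prime-to-$p$ quotient, that is $\pi_1(X,a)^{(p'-\text{geo})} = \pi_1(X,a)^{(p')}$, and analogously $\Gal(k_s/k)^{(p')}=\Gal(k_s/k)$. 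Combined with $k^{(p')}=k_s$, Definitions~\ref{looptorsor1} and \ref{looptorsor2} yield the identification
\[
H^1_{p'-\text{loop}}(X,\bG) \;=\; H^1_{p'-\text{geo}-\text{loop}}(X,\bG).
\]
In particular, a $\widetilde{\bG}$-torsor is $p'$-loop if and only if it is $p'$-geometric loop.

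For the implication (a)$\Rightarrow$(b), I would first invoke the above equivalence to view $\widetilde{E}$ as a $p'$-geometric loop torsor. Theorem~\ref{thm_laurent}(i) then supplies a maximal torus of $_{\widetilde{E}}\widetilde{\bG}$ split by some $p'$-geometric Galois cover of $R_n$. The Galois group of this cover is a finite quotient of $\pi_1(R_n)^{(p'-\text{geo})}$, which by the computation above is a profinite $p'$-group; hence the cover is a finite Galois $p'$-extension of $R_n$, proving (b).

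For the implication (b)$\Rightarrow$(a), observe that the existence of a maximal torus in $_{\widetilde{E}}\widetilde{\bG}$ makes $\widetilde{E}$ toral, and all the hypotheses of Theorem~\ref{thm_laurent}(ii) are met: $\widetilde{E}$ is split by $R_n^{p'-\text{geo}}$ by assumption, and the maximal torus is split by a finite Galois $p'$-extension. Theorem~\ref{thm_laurent}(ii) gives that $\widetilde{E}$ is $p'$-geometric loop, and the equivalence of notions established in the first step converts this into (a). No serious obstacle is anticipated; the only subtlety is the first paragraph, i.e.\ checking that the hypothesis on $\Gal(k_s/k)$ suffices to identify the two pro-$p'$ completions of $\pi_1(X,a)$ together with their natural coefficient modules.
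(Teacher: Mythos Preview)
Your proposal is correct and follows essentially the same approach as the paper. The paper's proof is terser: it simply observes that the hypothesis on $\Gal(k_s/k)$ forces every $p'$-geometric Galois cover of $R_n$ to be a $p'$-cover (equivalently, $R_n^{p'-geo}$ is a limit of Galois $p'$-extensions), and then invokes Theorem~\ref{thm_laurent}(i) for $(a)\Rightarrow(b)$ and (ii) for $(b)\Rightarrow(a)$; your version unpacks this same observation at the level of fundamental groups and coefficient modules, making explicit the identifications $\pi_1(X,a)^{(p'-\text{geo})}=\pi_1(X,a)^{(p')}$ and $k^{(p')}=k_s$.
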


\begin{proof} The assumption 
implies that $R_n^{p'-geo}$ is a limit of Galois 
extensions of $R_n$ of $p'$-degree.
In other words,   $p'-$geometric Galois covers of $R_n$ are also $p'$-covers.
Theorem \ref{thm_laurent}(i)
yields the implication $(a) \Longrightarrow (b)$
and the converse is given by (ii).
\end{proof}

\bigskip 


\end{document}